\documentclass[reqno, 11pt]{amsart}

\usepackage[text={155mm,235mm},centering]{geometry}
\geometry{a4paper}
\input diagxy
\input xy

\usepackage[active]{srcltx} 

\linespread{1.2}

\vfuzz2pt

\newtheorem{thm}{Theorem}[section]
\newtheorem{main thm}{Main Theorem}[section]
\newtheorem{cor}[thm]{Corollary}
\newtheorem{lem}[thm]{Lemma}

\theoremstyle{definition}
\newtheorem{defn}[thm]{Definition}
\theoremstyle{property}

\theoremstyle{remark}
\newtheorem{rem}[thm]{Remark}

\numberwithin{equation}{section}
\usepackage[mathscr]{eucal}
\usepackage[all]{xy}
\usepackage{mathrsfs}
\usepackage{xypic}
\usepackage{amsfonts}
\usepackage{amsmath}
\usepackage{amsthm,bm}
\usepackage{amssymb,tikz-cd}
\usepackage{latexsym}
\usepackage{tabularx}
\usepackage{graphicx}
\usepackage{pict2e}
\usepackage[pagebackref]{hyperref}
\usepackage{tikz}
\usepackage{textcomp}
\usepackage[scr=rsfs]{mathalfa}

\definecolor{ceruleanblue}{rgb}{0.16, 0.32, 0.75}
\hypersetup{colorlinks=true,allcolors=ceruleanblue}
\setcounter{tocdepth}{1}

\allowdisplaybreaks

\begin{document}

\title[The Hodge and Grothendieck's standard conjectures  for fiber bundles]{Algebraic cycles on fiber bundles of Leray-Hirsch type}

\author{Lingxu Meng}
\address{Department of Mathematics, North University of China, Taiyuan, Shanxi 030051,  P. R. China}
\email{menglingxu@nuc.edu.cn}%

\subjclass[2010]{Primary 14C25, 14C15, 14C30}
\keywords{Hodge conjecture; standard conjectures;  Leray-Hirsch theorem; Chow group; fiber bundle;  product variety}


\begin{abstract}
  We give a theorem of Leray-Hirsch type for Chow groups and use it to study the Hogde  and Grothendieck's standard conjectures for algebraic fiber bundles of Leray-Hirsch type.  Morevoer, the Hodge conjecture for  product varieties will also be considered.
\end{abstract}

\maketitle


\section{Introduction}
Algebraic cycle is a central  object in algebraic geometry, which plays an important role in the study of the geometry, topology  and arithmetic of algebraic varieties.
There are many important conjectures on algebraic cycles.

W. Hodge \cite{Ho}  posed a question: On a smooth complex projective variety, is any integral Hodge class   algebraic?
 M.  Atiyah  and F. Hirzebruch  \cite{AH} first gave it an counterexample and pointed that there are extra conditions
for the torsion class to be algebraic.
A. Grothendieck \cite{G} observed that it is necessary to use  rational coefficients  in place of integral ones.
Now, the corrected version is  the well-known  \emph{Hodge conjecture}  and Hodge's original one is called  the \emph{integral Hodge conjecture}.
For a smooth projective variety $X$, denote by $\textrm{Hodge}(X,\mathbb{Z})$ the statement: \emph{The integral Hodge conjecture is true for $X$} and  by $\textrm{Hodge}(X,\mathbb{Q})$ the statement: \emph{The Hodge conjecture is true for $X$}. Evidently, $\textrm{Hodge}(X,\mathbb{Z})$ implies $\textrm{Hodge}(X,\mathbb{Q})$.

For understanding the Weil conjectures, A. Grothendieck \cite{G0} posed several conjectures on algebraic cycles for smooth projective varieties over algebraically closed ground fields. Here, we only describe them over $\mathbb{C}$.
Suppose that $X$ is a smooth complex projective variety with dimension $n$ and set $h^p(X)=\textrm{dim}_{\mathbb{Q}}H_{alg}^{2p}(X,\mathbb{Q})$.

$\bullet$ the conjecture $A(X)$: $h^p(X)=h^{n-p}(X)$ for all $p\in \mathbb{N}$;

$\bullet$ the  conjecture of Lefschetz type $B(X)$:  the correspondence $\Lambda\in H^{2n-2}(X,\mathbb{Q})$ associated to the Lefschetz operatior $L$ is algebraic;

$\bullet$ the K\"{u}nneth conjecture $C(X)$: all components of the class $[\Delta]^{top}\in H^{2n}(X\times X,\mathbb{Q})$ of diagonal under the K\"{u}nneth decomposition are algebraic;

$\bullet$ the conjecture $D(X)$: the numerical and homological equivalences of algebraic cycles on $X$ coincide.

These conjectures are not independent of each other. For relations among them, we refer to \cite{G0,K,K2,L,M}.
The Hodge conjecture and the standard conjectures are so far away from being solved.
In this paper, we will concentrate on the problem: when the algebraic fiber bundle satisfies these conjectures.
There  is not many results in this aspect, see \cite{Z,V}.
We study  these conjectures on the algebraic fiber bundles of Leray-Hirsch type (see Definition \ref{def}) and the main result is
\begin{thm}\label{L-H2}
Assume that the complex projective variety $E$ is an algebraic fiber bundle  over a smooth complex projective variety $X$.

$(1)$ Suppose that $E$ is of Leray-Hirsch type. Then $\emph{Hodge}(E,\mathbb{Z})$  if and only if $\emph{Hodge}(X,\mathbb{Z})$.

$(2)$ Suppose that $E$ is of $\mathbb{Q}$-Leray-Hirsch type. Then $S(E)$ if and only if $S(X)$, where $S(\bullet)$ means one of $A(\bullet)$, $B(\bullet)$, $C(\bullet)$, $D(\bullet)$, $\emph{Hodge}(\bullet,\mathbb{Q})$.
\end{thm}

Product varieties can be viewed as trivial fiber bundles.
An interesting problem is whether the integral Hodge conjecture or Hodge conjecture is  true for  product varieties  if it  is true for all factors.
A.  Conte and  J. Murre \cite{CM} proved that the Hodge conjecture holds on the product of a smooth threefold and  a smooth rational curve.
F. Hazama \cite{Ha} investigated the algebraic cycles on nonsimple Abelian varieties and then gave an affirmative answer of the Hodge conjecture  for the product of two stably nondegenerate Abelian  varieties which have no simple components of type IV.
J. Ram\'{o}n Mar\'{\i} \cite{RM} and   T. Shioda \cite{Sh} studied the Hodge conjecture for  products of  surfaces and  products of Fermat varieties respectively.
Z. Xu \cite{X} proved the Hodge conjecture holds for arbitrary products of generalized Kummer varieties.
We will consider the following cases  from the viewpoint of algebraic fiber bundles.
\begin{thm}\label{product}
Let $X$ and $Y$  be smooth complex projective varieties.

$(1)$ If $H^*(Y,\mathbb{Z})$ is torsion free and algebraic, then $\emph{Hodge}(X\times Y,\mathbb{Z})$  if and only if $\emph{Hodge}(X,\mathbb{Z})$.

$(2)$ If $H^{p,q}(Y)=0$ for all $p$, $q$ satisfying $p\neq q$ \emph{(}e.g., if $H^*(Y,\mathbb{Q})$ is algebraic\emph{)}, then $\emph{Hodge}(X\times Y,\mathbb{Q})$ if and only if $\emph{Hodge}(X,\mathbb{Q})$ and $\emph{Hodge}(Y,\mathbb{Q})$.
\end{thm}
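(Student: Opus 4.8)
The plan is to regard the product as the trivial fibre bundle $\mathrm{pr}_1\colon X\times Y\to X$ with fibre $Y$ and to compare Hodge and algebraic classes through the K\"{u}nneth decomposition. In the framework of Theorem~\ref{L-H2} one wants global cycle classes on $X\times Y$ whose fibrewise restrictions freely generate $\mathrm{CH}^*(Y)$; since the present hypotheses control $Y$ only cohomologically, rather than invoking that theorem I would argue directly through the K\"{u}nneth isomorphism, which is the substance of the trivial-bundle situation once cohomology replaces Chow groups. Throughout I write $\mathrm{pr}_1,\mathrm{pr}_2$ for the two projections and, for cycles $B$ on $X$ and $\Gamma$ on $Y$, use $\mathrm{cl}(B\times\Gamma)=\mathrm{pr}_1^*\mathrm{cl}(B)\cup\mathrm{pr}_2^*\mathrm{cl}(\Gamma)$, so that external products of algebraic cycles are again algebraic.

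For $(1)$, torsion-freeness of $H^*(Y,\mathbb{Z})$ makes the K\"{u}nneth map $H^*(X,\mathbb{Z})\otimes H^*(Y,\mathbb{Z})\to H^*(X\times Y,\mathbb{Z})$ an isomorphism (no $\mathrm{Tor}$ term), while algebraicity forces $H^{\mathrm{odd}}(Y,\mathbb{Z})=0$ and lets me pick a basis $\gamma_1,\dots,\gamma_r$ of $H^*(Y,\mathbb{Z})$ of algebraic classes, each $\gamma_j$ of pure Hodge type $(k_j,k_j)$. Given an integral Hodge class $\alpha$ on $X\times Y$, I expand $\alpha=\sum_j\beta_j\otimes\gamma_j$ with $\beta_j\in H^*(X,\mathbb{Z})$; purity of the $\gamma_j$ shows $\alpha$ is of type $(m,m)$ if and only if each $\beta_j$ is a Hodge class on $X$. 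If $\mathrm{Hodge}(X,\mathbb{Z})$ holds, then each $\beta_j=\mathrm{cl}(B_j)$ and each $\gamma_j=\mathrm{cl}(\Gamma_j)$ are algebraic, whence $\alpha=\sum_j\mathrm{cl}(B_j\times\Gamma_j)$ is algebraic. Conversely, for a Hodge class $\beta$ on $X$ the class $\mathrm{pr}_1^*\beta$ is Hodge on $X\times Y$, hence algebraic by $\mathrm{Hodge}(X\times Y,\mathbb{Z})$; restricting along the section $s\colon X\to X\times Y$, $x\mapsto(x,y_0)$, and using $s^*\mathrm{pr}_1^*=\mathrm{id}$ recovers $\beta$ as an algebraic class, giving $\mathrm{Hodge}(X,\mathbb{Z})$.

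For $(2)$ the same scheme runs over $\mathbb{Q}$, where the K\"{u}nneth isomorphism is automatic. The hypothesis $H^{p,q}(Y)=0$ for $p\neq q$ says precisely that every rational class of $Y$ is a Hodge class, so I take a basis $\gamma_1,\dots,\gamma_r$ of $H^*(Y,\mathbb{Q})$ with each $\gamma_j$ of pure type $(k_j,k_j)$, but now the $\gamma_j$ need not be algebraic. Decomposing a rational Hodge class $\alpha=\sum_j\beta_j\otimes\gamma_j$ as before shows $\alpha$ is Hodge iff every $\beta_j$ is Hodge on $X$; to realize $\alpha$ by algebraic cycles I now need each $\beta_j$ algebraic (from $\mathrm{Hodge}(X,\mathbb{Q})$) \emph{and} each $\gamma_j$ algebraic (from $\mathrm{Hodge}(Y,\mathbb{Q})$), which is exactly why both hypotheses must enter. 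The converse splits symmetrically: pulling Hodge classes back from $X$ via $\mathrm{pr}_1$ (resp.\ from $Y$ via $\mathrm{pr}_2$) and restricting along the sections $X\times\{y_0\}$ (resp.\ $\{x_0\}\times Y$) shows $\mathrm{Hodge}(X\times Y,\mathbb{Q})$ implies both $\mathrm{Hodge}(X,\mathbb{Q})$ and $\mathrm{Hodge}(Y,\mathbb{Q})$.

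The step I expect to carry the real weight is the compatibility of the K\"{u}nneth decomposition with the Hodge structure, namely that the factors $\beta_j$ of a Hodge class are themselves Hodge: this is precisely where the purity of $H^*(Y)$ (types $(k,k)$ only) is indispensable, since without it the decomposition would mix Hodge types and the statement would fail. For $(1)$ the additional delicate point is integral — one must rule out a torsion obstruction — and this is exactly what the torsion-freeness of $H^*(Y,\mathbb{Z})$ secures, making the integral K\"{u}nneth clean and the basis $\gamma_1,\dots,\gamma_r$ genuinely generating over $\mathbb{Z}$.
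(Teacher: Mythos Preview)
Your argument is correct and follows essentially the same route as the paper: both reduce to the K\"{u}nneth decomposition of $H^*(X\times Y)$ together with the observation that, because $H^*(Y)$ has only $(q,q)$-types, a class $\alpha=\sum_j\beta_j\otimes\gamma_j$ is Hodge precisely when every $\beta_j$ is Hodge on $X$---which is exactly the claim the paper isolates as the isomorphism $\bigoplus_{p+q=k}Hdg^p(X,\mathbb{Q})\otimes Hdg^q(Y,\mathbb{Q})\cong Hdg^k(X\times Y,\mathbb{Q})$. The only cosmetic difference is in the converse direction: the paper deduces $\mathrm{Hodge}(X)$ from $\mathrm{Hodge}(X\times Y)$ via the projection formula and the pushforward of $[X\times\{y_0\}]$ (Lemma~\ref{surj-lem}), whereas you pull back along the section $x\mapsto(x,y_0)$; these are dual formulations of the same fact.
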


For example, if the Hodge conjecture is true for $X$, then it is true for $S_1\times S_2\times \ldots \times S_n \times X$, where $S_i$ is a smooth complex  projective surface with $p_g(S_i)=q(S_i)=0$ (\emph{e.g.}, rational, Enriques, del Pezzo,  Godeaux, Campedelli, Burniat,  Hirzebruch surfaces, \emph{etc.}) for $1\leq i\leq n$.
Moreover, any smooth complex  projective cellular variety $Y$  satisfies the assumptions in Theorem \ref{product}, since $H^*(Y,\mathbb{Z})$ is a free abelain group with a basis consisting of algebraic classes (\cite[Example 1.9.1, 19.1.11]{F}).

\subsection*{Acknowledgements}
The author is supported by Scientific and Technological Innovation Programs of Higher Education Institutions in Shanxi (Grant No. 2020L0290), the National Natural Science Foundation of China (Grant No. 12001500, 12071444) and the Natural Science Foundation of Shanxi Province of China (Grant No. 201901D111141).


\section{Preliminaries}
\subsection{Notations}
Let $k$ be a field.
A \emph{variety} is a separated scheme of finite type over the field $k$.
Denote by $X(k)$ the set of closed points of $X$.

Recall some notations, see \cite[Chapter 17]{F} for detail definitions.
For any  variety $X$, denote its Chow group by $\textrm{CH}_*(X)$ and Chow cohomology  by $A^*(X)$ (\cite[Definition 17.3]{F}).
In general, $\textrm{CH}_*(X)$ may be not a ring for a singular variety.
For a subvariety $V\subseteq X$, write $[V]$ as the class of $V$ in $\textrm{CH}_*(X)$.
There exists a cup product  on $A^*(X)$, denoted by $\cdot$,  which make $A^*(X)$  a  graded commutative ring with unit $\mathbf{1}_X$ (\cite[Example 17.4.4 (a)]{F}).
There exists a cap product $\cap:A^p(X)\otimes \textrm{CH}_q(X)\rightarrow \textrm{CH}_{q-p}(X)$ satisfying that $\mathbf{1}_X\cap \alpha =\alpha$.
For a morphism $f:X\rightarrow Y$ of varieties, there exists a ring homomorphism $f^*:A^{*}(Y)\rightarrow A^*(X)$ (\cite[p. 324]{F}).
Set $n=\textrm{dim}_kX-\textrm{dim}_kY$.
If $f$ is a flat or local complete intersection morphism (\cite[B. 8.1.2]{F}), there exists a Gysin homomorphism
$f^*:\textrm{CH}_*(Y)\rightarrow \textrm{CH}_{*+n}(X)$  (\cite[p. 328 ($G_1$)]{F}).
Moreover, if $f$ is proper, there exists another Gysin homomorphism $f_*:A^*(X)\rightarrow A^{*-n}(Y)$ (\cite[p. 328 ($G_2$)]{F}).
In particular, if $i:X\rightarrow Y$ is a closed embedding, set $\alpha|_X=i^*\alpha$ for $\alpha\in A^*(Y)$ or $\textrm{CH}_*(Y)$.
In the category of smooth varieties,

(1)  (\cite[Corollary 17.4]{F}) $\bullet\cap[X]:A^{*}(X)\tilde{\rightarrow} \textrm{CH}_{\textrm{dim}_kX-*}(X)$ is an isomorphism
and $(\alpha\cap [X])\cdot (\beta\cap [X])=(\alpha\cdot \beta)\cap [X]$ for $\alpha$, $\beta\in A^*(X)$;

(2)  (\cite[Example 17.4.3 (c)]{F}) the Gysin homomorphism $f^*$ coincides with the usual pullback of the Chow groups (\cite[p. 113]{F}).

On a complete variety $X$, the degree of $\alpha\in \textrm{CH}_0(X)$ is denoted by $\int_X\alpha$ (\cite[Defintion 1.4]{F}).
Let $f:X\rightarrow Y$ be a proper smooth surjective morphism  of varieties.
For a general  point $y\in Y(k)$, the fiber $X_y=f^{-1}(y)$ is a complete variety with dimension $n=\textrm{dim}_k X-\textrm{dim}_k Y$.
Denote by $i:\{y\}\rightarrow Y$  the  inclusion.
For any $\alpha\in A^n(X)$,
\begin{displaymath}
i^*f_*(\alpha)=(f|_{X_y})_*(\alpha|_{X_y})=\left(\int_{X_y}\alpha|_{X_y}\cap [X_y]\right)\cdot \mathbf{1}_{\{y\}}
\end{displaymath}
by \cite[Example 17.4.1 (b)]{F}.
Since $A^0(Y)=\mathbb{Z}\cdot \mathbf{1}_Y$, $i^*:A^0(Y)\rightarrow A^0(\{y\})$ is an isomorphism, so $f_*(\alpha)=\left(\int_{X_y}\alpha|_{X_y}\cap [X_y]\right)\cdot \mathbf{1}_Y$.

An \emph{algebraic fiber bundle} with general fiber $F$ means a morphism $\pi:E\rightarrow X$ of (not necessarily smooth or projective) varieties satisfying that there is an (Zariski) open covering $\mathfrak{U}$ of $X$  such that  $\pi:\pi^{-1}(U)\rightarrow U$ is isomorphic to the projection from $U\times F$ to $U$ for every $U\in \mathfrak{U}$.

\subsection{The complex cases}
Let $X$ be a smooth complex projective variety.
Denote by  $H^*(X,\mathbb{Z})$ (resp. $H^*(X,\mathbb{Q})$) the Betti cohomology of $X$, i.e., the singular cohomology of $X(\mathbb{C})$ under the analytic topology with integral (resp.  rational) coefficients.
Set $Hdg^{p}(X,\mathbb{Z})=H^{2p}(X,\mathbb{Z})\cap H^{p,p}(X)\subseteq H^{2p}(X,\mathbb{Z})$ and  $Hdg^{*}(X,\mathbb{Z})=\bigoplus_{p\geq 0} Hdg^p(X,\mathbb{Z})$, where $H^{p,p}(X)$ is the $(p,p)$-th Dolbeault cohomology of $X(\mathbb{C})$.
Using $\mathbb{Q}$ instead of $\mathbb{Z}$, we can define  $Hdg^{*}(X,\mathbb{Q})$ similarly.
There is a natural isomorphism $Hdg^{*}(X,\mathbb{Z})\otimes_{\mathbb{Z}}\mathbb{Q}\cong Hdg^{*}(X,\mathbb{Q})$.

Denote by $\textrm{CH}^*(X)$ (=$\textrm{CH}_{\textrm{dim}_{\mathbb{C}}X-*}(X)$)  the  Chow ring of $X$.
Denote the \emph{cycle map} by $\textrm{cl}^*_X:\textrm{CH}^*(X)\rightarrow H^*(X,\mathbb{Z})$ and the \emph{refined cycle map} $\lambda^*_X:\textrm{CH}^*(X)\rightarrow Hdg^*(X,\mathbb{Z})$, which  coincides with each other except ranges.
Define $\textrm{CH}^*(X)_{\mathbb{Q}}=\textrm{CH}^*(X)\otimes_{\mathbb{Z}}\mathbb{Q}$,
$\textrm{cl}^*_{X,\mathbb{Q}}=\textrm{cl}^*_X\otimes 1: \textrm{CH}^*(X)_{\mathbb{Q}}\rightarrow H^*(X,\mathbb{Q})$ and $\lambda^*_{X,\mathbb{Q}}=\lambda^*_X\otimes 1: \textrm{CH}^*(X)_{\mathbb{Q}}\rightarrow Hdg^*(X,\mathbb{Q})$.
Let $V$ be a subvariety of $X$.
To avoid confusions, $[V]^{top}$ and $[V]^{alg}$ means the class of $V$ in $H^*(X,\mathbb{Z})$ and $\textrm{CH}^*(X)$ respectively.
Clearly, $\textrm{cl}^*_X([V]^{alg})=[V]^{top}$.

Set $H_{alg}^*(X,\mathbb{Z})=\textrm{Im} \textrm{cl}^*_{X}$ (resp. $H_{alg}^*(X,\mathbb{Q})=\textrm{Im} \textrm{cl}^*_{X,\mathbb{Q}}$).
Obviously, $H_{alg}^*(X,\mathbb{Z})\subseteq Hdg^{*}(X,\mathbb{Z})$ (resp. $H_{alg}^*(X,\mathbb{Q})\subseteq Hdg^{*}(X,\mathbb{Q})$).
A class in $H_{alg}^*(X,\mathbb{Z})$ (resp. $H_{alg}^*(X,\mathbb{Q})$) is said to be an \emph{algebraic class} and a subset $A\subseteq H_{alg}^*(X,\mathbb{Z})$ (resp. $H_{alg}^*(X,\mathbb{Q})$) is said to be \emph{algebraic}.
The integral Hodge conjecture $\textrm{Hodge}(X,\mathbb{Z})$ (resp. Hodge conjecture  $\textrm{Hodge}(X,\mathbb{Q})$) means that  $Hdg^{*}(X,\mathbb{Z})$  (resp. $Hdg^{*}(X,\mathbb{Q})$) is algebraic, i.e.,  $H_{alg}^*(X,\mathbb{Z})= Hdg^{*}(X,\mathbb{Z})$ (resp. $H_{alg}^*(X,\mathbb{Q})= Hdg^{*}(X,\mathbb{Q})$), or equivalently, $\lambda^*_{X}$ (resp. $\lambda^*_{X,\mathbb{Q}}$) is surjective.

\section{Leray-Hirsch theorem for Chow groups}
In this subsection, assume that $k$ is a field and any variety is over $k$.

\subsection{The suitable conditions}
For proving Theorem \ref{L-H2}, we need a theorem  of  Leray-Hirsch type for Chow groups.
Unlike the cases in topology,  the Leray-Hirsch theorem for Chow groups is not true in general cases.
It is necessary  to add some assumptions on  fibers. We only consider the case that the general fiber is  smooth and complete.
A. Collino and W. Fulton \cite[Appendix C]{CF}  gave a version of Leray-Hirsch theorem under the assumptions that the general fiber is a cellular  variety and satisfies the Poincar\'{e} duality property.
In a different way, we will provide a version of Leray-Hirsch theorem, which is suitable for the most general cases in some sense.

For any algebraic fiber bundle $E$ over $X$, the condition $\textbf{(L-H)}$ of Leray-Hirsch type means:

\emph{There exist cycle classes $e_1$, $\ldots$, $e_r$  in $A^*(E)$  such that their restrictions $e_1|_{E_x}$, $\ldots$, $e_r|_{E_x}$ freely generate $A^*(E_x)$  for every point $x\in X(k)$}.\\
Clearly, if a bundle with general fiber $F$ satisfies  $\textbf{(L-H)}$,  $A^*(F)$ is a free abelian group.
By analogy with the situation in topology, the Leray-Hirsch theorem  should be stated  as the following form (where $\textbf{(C)}$ is a undetermined condition):

\emph{Let  $\pi:E\rightarrow X$ be an algebraic fiber bundle with the general fiber $F$.
Assume that $F$ is a smooth complete variety satisfying the condition $\textbf{\emph{(C)}}$.
Suppose that  $E$ satisfies $\textbf{\emph{(L-H)}}$.
Then
\begin{displaymath}
\sum_{i=1}^{r}e_i\cap\pi^*(\bullet):\bigoplus_{i=1}^{r}\emph{CH}_{*+u_i-n}(X)\rightarrow \emph{CH}_{*}(E)
\end{displaymath}
is an isomorphism, where $e_i\in A^{u_i}(E)$ for $1\leq i\leq r$.}

Now, we attempt to find the  assumption $\textbf{(C)}$ as weak as possible for the such version of Leray-Hirsch theorem.
Let $X$ be  any variety and $F$ a smooth complete variety.  Suppose that $F$  satisfies the condition $\textbf{(C)}$ and $A^*(F)$ is a free abelian group.
Let $pr_1$ and $pr_2$ be the two projections from $X\times F$ onto $X$ and $F$ respectively.
Suppose that  $f_1$, $\cdots$, $f_r$ is a homogeneous basis of $A^*(F)$, where $f_i\in A^{u_i}(F)$ for $1\leq i\leq r$.
Then $pr_2^*f_1$, $\cdots$, $pr_2^*f_r$ satisfies $\textbf{(L-H)}$.
Hence the trivial fiber bundle $pr_1:X\times F\rightarrow X$ satisfies the assumptions of Leray-Hirsch theroem.
Then
$\sum_{i=1}^{r}(pr_2^*f_i)\cap pr_1^*(\bullet):\bigoplus_{i=1}^{r}\textrm{CH}_{*+u_i-n}(X)\rightarrow \textrm{CH}_{*}(X\times F)$
should be an isomorphism.
By \cite[Corollary 17.4 (b)]{F}, $\sum_{i=1}^{r}(pr_2^*f_i)\cap pr_1^*(\bullet)=\sum_{i=1}^{r} \bullet\times f_i$.
So
\begin{equation}\label{weakest}
\times:\textrm{CH}_{*}(X)\otimes_{\mathbb{Z}}\textrm{CH}_*(F)\rightarrow \textrm{CH}_{*}(X\times F)
\end{equation}
should be an isomorphism.
Hence, \emph{a necessary condition that the Leray-Hirsch theorem holds is that \emph{(\ref{weakest})} is an isomorphism  for any variety $X$.}
In fact, this condition and $\textbf{(L-H)}$ are enough for the Leray-Hirsch theorem  for Chow groups, see the following Theorem \ref{L-H} and Lemma \ref{ppp}.
With the same argument, it is easy to obtain a similar one that the Leray-Hirsch theorem with rational coefficients holds   if we use $A^*(\bullet)$ instead of $A^*(\bullet)_{\mathbb{Q}}$ in $\textbf{(L-H)}$.

\subsection{Leray-Hirsch theorem}
A smooth complete variety $X$ is said to satisfy the \emph{K\"{u}nneth property} (resp. \emph{$\mathbb{Q}$-K\"{u}nneth property}), if  the exterior product
\begin{displaymath}
\times:\textrm{CH}_*(X)\otimes_{\mathbb{Z}}\textrm{CH}_*(Y)\rightarrow \textrm{CH}_*(X\times Y)
\end{displaymath}
(resp.
\begin{displaymath}
\times:\textrm{CH}_*(X)_{\mathbb{Q}}\otimes_{\mathbb{Q}}\textrm{CH}_*(Y)_{\mathbb{Q}}\rightarrow \textrm{CH}_*(X\times Y)_{\mathbb{Q}})
\end{displaymath}
is surjective for any variety $Y$.
All smooth complete linear schemes (in the sense of Totaro \cite[Section 3]{To}, e.g. cellular varieties) satisfy the K\"{u}nneth property by  \cite[Proposition 1]{To}.
Let $B$ be a Borel subgroup of a reductive group $G$ over the ground field $\mathbb{C}$, then the quotient $G/B$ is a cellular varieties (see \cite[p. 193. Theorem (b), Remark (3)]{B}).
Projective spaces, Grassmann varieties and  flag varieties are all examples of this type.

\begin{rem}
In the  usual  sense, the notation ``K\"{u}nneth property" should mean that  the exterior product is an isomorphism and our definition here looks weaker than that.
In fact, they are the same thing, see Corollary \ref{ppp}.
\end{rem}

\begin{lem}\label{point-open}
Let  $\pi:E\rightarrow X$ be an algebraic fiber bundle with general fiber $F$.
Assume that $F$ is a smooth complete variety and satisfies the K\"{u}nneth property \emph{(}resp. $\mathbb{Q}$-K\"{u}nneth property\emph{)}.
If $\alpha\in \emph{CH}_*(E)$ \emph{(}resp. $\alpha\in \emph{CH}_*(E)_{\mathbb{Q}}$\emph{)} satisfies that $\alpha|_{E_{x_0}}\in \emph{CH}_*(E_{x_0})$ \emph{(}resp. $\alpha|_{E_{x_0}}\in \emph{CH}_*(E_{x_0})_{\mathbb{Q}}$\emph{)} for a point $x_0\in X(k)$.
Then  $\alpha|_{E_U}=0\in \emph{CH}_*(E_U)$ \emph{(}resp. $\alpha|_{E_U}=0\in \emph{CH}_*(E_U)_{\mathbb{Q}}$\emph{)} for some nonempty open subset $U\subseteq X$, where $E_U=\pi^{-1}(U)$.
\end{lem}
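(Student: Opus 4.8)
The plan is to reduce to a trivialising neighbourhood and then compare the restriction of $\alpha$ to the special fibre $E_{x_0}$ with its restriction to the generic fibre. First I would shrink $X$: choose a nonempty open $U_0\subseteq X$ containing $x_0$ over which the bundle trivialises, $E_{U_0}\cong U_0\times F$, and (shrinking further) assume $U_0$ is irreducible, say of dimension $d$, with generic point $\eta$ and function field $k(\eta)=k(U_0)$. Since $F$ enjoys the weak K\"{u}nneth property, applying it to the factor $F$ and the variety $U_0$ gives a surjection $\times\colon \mathrm{CH}_*(F)\otimes_{\mathbb{Z}}\mathrm{CH}_*(U_0)\to \mathrm{CH}_*(U_0\times F)$, so I may write
\[
\alpha|_{E_{U_0}}=\sum_i f_i\times \beta_i,\qquad f_i\in \mathrm{CH}_*(F),\ \beta_i\in \mathrm{CH}_{m_i}(U_0),
\]
with each $\beta_i$ homogeneous. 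Let $c_i\in\mathbb{Z}$ denote the coefficient of the fundamental class $[U_0]$ in $\beta_i$ (so $c_i=0$ unless $m_i=d$), and set $g=\sum_i c_i f_i\in \mathrm{CH}_*(F)$.

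The heart of the argument is that both relevant restriction maps factor through the single class $g$. On the one hand, the fibre $E_{x_0}\cong\{x_0\}\times F$ is regularly embedded in $E_{U_0}$ (here $F$, and in the situation of Theorem~\ref{L-H2} also $X$, are smooth, so $\{x_0\}\hookrightarrow U_0$ is a regular embedding), and the induced Gysin pullback is compatible with exterior products: $(f_i\times\beta_i)|_{E_{x_0}}=f_i\times (\beta_i|_{x_0})$. Since $\beta_i|_{x_0}\in \mathrm{CH}_{m_i-d}(\{x_0\})$ vanishes for $m_i<d$ and equals $c_i\cdot\mathbf 1$ for $m_i=d$, I get $\alpha|_{E_{x_0}}=g$ under $E_{x_0}\cong F$. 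On the other hand, flat restriction to the generic fibre $E_\eta=F_{k(\eta)}$ is likewise compatible with exterior products, and $\beta_i$ restricts to $0$ at $\eta$ when $m_i<d$ and to $c_i\cdot\mathbf 1$ when $m_i=d$ (a proper subvariety misses $\eta$, while $[U_0]$ restricts to the fundamental class of $\mathrm{Spec}\,k(\eta)$). Writing $\phi\colon \mathrm{CH}_*(F)\to \mathrm{CH}_*(F_{k(\eta)})$ for the flat base-change map, this says exactly $\alpha|_{E_\eta}=\phi(g)$.

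Now the hypothesis $\alpha|_{E_{x_0}}=0$ gives $g=0$ in $\mathrm{CH}_*(F)$, whence $\alpha|_{E_\eta}=\phi(g)=0$. It remains to spread this vanishing out to an open set. I would invoke continuity of Chow groups: $\mathrm{CH}_*(F_{k(\eta)})=\varinjlim_{\varnothing\neq V\subseteq U_0}\mathrm{CH}_*(V\times F)$, the limit over nonempty opens $V$ with flat restriction maps. Concretely, the rational equivalences witnessing $\alpha|_{E_\eta}=0$ involve finitely many rational functions on finitely many subvarieties, all defined over a finitely generated $k$-subalgebra of $k(\eta)$, i.e. over the coordinate ring of some affine open $V\subseteq U_0$; hence $\alpha|_{E_V}=0$, which is the desired conclusion with $U=V$. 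The main obstacle is the comparison in the second paragraph: one must see that the special-fibre and generic-fibre restrictions both collapse the K\"{u}nneth decomposition onto the \emph{same} integral combination $g=\sum_i c_i f_i$ of fibre classes, so that the purely pointwise hypothesis at $x_0$ propagates to the generic fibre. The supporting technical points are the compatibility of exterior products with Gysin and flat pullbacks and the fact that the fibre $E_{x_0}$ is regularly embedded; with those in hand, no freeness or basis assumption on $\mathrm{CH}_*(F)$ is needed.
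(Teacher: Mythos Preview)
Your argument is correct and reaches the conclusion by a genuinely different route from the paper. The paper first invokes \cite[Theorem~2.1~(ii)]{ES2} to deduce that $\mathrm{CH}_*(F)$ is a free abelian group, fixes a homogeneous $\mathbb{Z}$-basis $\{\gamma_i\}$, and writes $\alpha|_{E_V}=\sum_i\beta_i\times\gamma_i$; restricting to $E_{x_0}$ then forces each top-dimensional coefficient $\beta_i$ to vanish (this step uses linear independence of the $\gamma_i$, hence freeness), and $U$ is taken to be the explicit complement of the supports of the remaining $\beta_i$. You instead bypass freeness entirely: the key observation is that the special-fibre and generic-fibre restrictions of $\alpha$ both collapse the K\"unneth expression to the \emph{same} class $g=\sum_i c_if_i$ (up to base change), so the hypothesis $\alpha|_{E_{x_0}}=0$ gives $g=0$ directly, hence $\alpha|_{E_\eta}=0$, and continuity $\mathrm{CH}_*(F_{k(\eta)})=\varinjlim_V\mathrm{CH}_*(V\times F)$ produces the open $U$. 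The paper's proof is more explicit (it names $U$ concretely) but leans on the external freeness result; yours is more self-contained and shows, as you note, that no structural hypothesis on $\mathrm{CH}_*(F)$ beyond the weak K\"unneth property is actually needed. The one technical caveat you already flag---that the Gysin restriction to $E_{x_0}$ requires $\{x_0\}\hookrightarrow U_0$ to be a regular embedding---is equally present (and equally unaddressed) in the paper's proof, and is harmless in the intended application where $X$ is smooth.
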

\begin{proof}
We only prove the integral case.
Since $F$ satisfies  the K\"{u}nneth property, the class of the diagonal $F\times F$ is in the image of $\times:\textrm{CH}_*(F)\otimes_{\mathbb{Z}}\textrm{CH}_*(F)\rightarrow \textrm{CH}_*(F\times F)$.
By \cite[Theorem 2.1 (ii)]{ES2}, $\textrm{CH}_*(F)$ is a free abelian group.
Assume that  $\{\,\gamma_i\,|\,1\leq i\leq r\,\}$ is a homogeneous  $\mathbb{Z}$-basis of  $\textrm{CH}_*(F)$.
Let $V\subseteq X$ be an irreducible open neighborhood  of $x_0$ such that $E_V\cong V\times F$.
Since $F$ satisfy the K\"{u}nneth property, we have $\alpha|_{E_V}=\sum_{i=1}^r \beta_{i}\times \gamma_i$ for some $\beta_i\in \textrm{CH}_{p_i}(V)$ ($1\leq i\leq r$).
Set $n=\textrm{dim}_kX$.
For $p_i=n$,  there exists $n_i\in \mathbb{Z}$ such that $\beta_i=n_i[V]$.
Then
\begin{displaymath}
0=\alpha|_{E_{x_0}}=\sum_{i=1}^r \beta_{i}|_{\{x_0\}}\times \gamma_i=\sum\limits_{\substack{1\leq i\leq r\\p_i=n}} \beta_{i}|_{\{x_0\}}\times \gamma_i=\sum\limits_{\substack{1\leq i\leq r\\p_i=n}} n_{i}\cdot [x_0]\times \gamma_i,
\end{displaymath}
which implies that $\beta_{i}=0$ if $p_i=n$.
Set $Z=\bigcup\limits_{\substack{1\leq i\leq r\\p_i<n}}\textrm{supp} \beta_i$ and $U=V-Z$.
Clearly, $Z$ is a subvariety of $V$ with dimension $<n$ and $U$ is a nonempty open subset in $X$.
Moreover, if $p_i<n$, $\beta_i|_U=0$ in $\textrm{CH}_*(U)$.
So
\begin{displaymath}
\alpha|_{E_V}=\sum\limits_{\substack{1\leq i\leq r\\p_i<n}} \beta_{i}\times \gamma_i
\end{displaymath}
restricts to zero in $\textrm{CH}_*(U)\otimes_{\mathbb{Z}}\textrm{CH}_*(F)\cong\textrm{CH}_*(U\times F)=\textrm{CH}_*(E_{U})$.

\end{proof}

\begin{thm}\label{L-H}
Let  $\pi:E\rightarrow X$ be an algebraic fiber bundle with the general fiber $F$.
Assume that $F$ is a smooth complete variety satisfying the  K\"{u}nneth property \emph{(}resp. $\mathbb{Q}$-K\"{u}nneth property\emph{)}.
Suppose that  there exist cycle classes $e_1$, $\ldots$, $e_r$ of pure degrees $u_1$ , $\ldots$, $u_r$ respectively in $A^*(E)$  \emph{(}resp. $A^*(E)_{\mathbb{Q}}$\emph{)} such that their restrictions $e_1|_{E_x}$, $\ldots$, $e_r|_{E_x}$ freely generate $A^*(E_x)$ \emph{(}resp. $A^*(E_x)_{\mathbb{Q}}$\emph{)}  for every point $x\in X(k)$.
Then
\begin{displaymath}
\sum_{i=1}^{r}e_i\cap\pi^*(\bullet):\bigoplus_{i=1}^{r}\emph{CH}_{*+u_i-n}(X)\rightarrow \emph{CH}_{*}(E)
\end{displaymath}
\emph{(}resp.
\begin{displaymath}
\sum_{i=1}^{r}e_i\cap\pi^*(\bullet):\bigoplus_{i=1}^{r}\emph{CH}_{*+u_i-n}(X)_{\mathbb{Q}}\rightarrow \emph{CH}_{*}(E)_{\mathbb{Q}})
\end{displaymath}
is an isomorphism, where $n=\emph{dim}_kE-\emph{dim}_kX$.
\end{thm}
\begin{proof}
We only give a proof for the integral case and the rational case can be proved similarly.
Set $E_C=\pi^{-1}(C)$ for an open or a closed subset $C$ of $X$.
Let $i_C:C\rightarrow X$ and $i_{E_C}:E_C\rightarrow E$ be the inclusions.
Set
\begin{displaymath}
\Phi_C=\sum_{i=1}^{r}(i_{E_C}^*e_i)\cap(\pi|_{E_C})^*(\bullet):\bigoplus_{i=1}^{r}\textrm{CH}_{*+u_i-n}(C)\rightarrow \textrm{CH}_{*}(E_C).
\end{displaymath}
Our aim is to prove that $\Phi_X$ is an isomorphism.
Fix a point $x_0\in X(k)$.

Denote by $\Delta\in \textrm{CH}_*(E_{x_0}\times E_{x_0})$ the cycle class of the diagonal of $E_x$.
Since $E_{x_0}\cong F$ satisfies the K\"{u}nneth property,
there exist $\alpha_i\in A^*(E_{x_0})$ for $1\leq i\leq r$ such that $\Delta=\sum_{i=1}^r(\alpha_i\cap [E_{x_0}])\times (e_i|_{E_{x_0}}\cap [E_{x_0}])$.
Let $pr_1$ and $pr_2$ be the two projections of $E_{x_0}\times E_{x_0}$ onto $E_{x_0}$ respectively.
Then
\begin{displaymath}
\begin{aligned}
e_i|_{E_{x_0}}\cap [E_{x_0}]=&pr_{2*}\left(\Delta\cdot pr_1^*(e_i|_{E_{x_0}}\cap [E_{x_0}])\right)\\
= &\,\sum_{j=1}^rpr_{2*}\left(\left((e_i|_{E_{x_0}}\cap [E_{x_0}])\cdot (\alpha_j\cap [E_{x_0}])\right)\times (e_j|_{x_0}\cap [E_{x_0}])\right)\\
=&\,\sum_{j=1}^r\left(\int_{E_{x_0}}(e_i|_{E_{x_0}}\cdot \alpha_j)\cap [E_{x_0}]\right)\cdot (e_j|_{E_{x_0}}\cap [E_{x_0}]).
\end{aligned}
\end{displaymath}
Since $\{\,e_i|_{E_{x_0}}\,|\,i=1,\cdots,r\,\}$ is a $\mathbb{Z}$-basis of $A^*(E_{x_0})$,  $\int_{E_{x_0}}(e_i|_{E_{x_0}}\cdot \alpha_j)\cap [E_{x_0}]=\delta_{ij}$, where $\delta_{ij}$ is Kronecker's delta.
There exist $f_i\in A^*(E)$ such that $f_i|_{E_x}=\alpha_i$ for $1\leq i\leq r$, since the restriction $A^*(E)\rightarrow A^*(E_{x_0})$ is surjective.
Then
\begin{displaymath}
\pi_*(e_i\cdot f_j)=\left(\int_{E_{x_0}}(e_i\cdot f_j)|_{E_{x_0}}\cap [E_{x_0}]\right)\cdot\mathbf{1}_X=\delta_{ij}\cdot\mathbf{1}_X,
\end{displaymath}
where $\mathbf{1}_X\in A^0(X)$ is the unit of $A^*(X)$.
Assume that $\Phi_X(\beta_1,\ldots, \beta_r)=0$, i.e., $\sum_{i=1}^{r}e_i\cap\pi^*(\beta_i)=0$ in $\textrm{CH}_{*}(E)$.
By \cite[p. 26, ($\textrm{G}_4$)(ii)]{FM},
\begin{displaymath}
0=\pi_*\left(f_j\cap\left(\sum_{i=1}^{r}e_i\cap\pi^*(\beta_i)\right)\right)=\sum_{i=1}^{r}\pi_*(e_i\cdot f_j)\cap\beta_i=\beta_j.
\end{displaymath}
So $\Phi_X$ is injective.

We prove that $\Phi_X$ is surjective by induction for the dimension of $X$.
Clearly, $\Phi_X$ is surjective for $\textrm{dim}_k X=0$.
For $\textrm{dim}_k X>0$, we may assume that $X$ is irreducible.
For any $\alpha\in \textrm{CH}_{*}(E)$, $\alpha|_{E_{x_0}}=\gamma\cap [E_{x_0}]$ for some $\gamma\in A^{n-*}(E_{x_0})$, since $E_{x_0}$ is a smooth variety.
By the assumption of the theorem, $\gamma=\sum_{i=1}^rn_ie_i|_{E_{x_0}}$ for some integers $n_1$, \ldots, $n_r$.
Set
\begin{displaymath}
\beta=(n_1[X],...,n_r[X])\in \bigoplus_{i=1}^{r}\textrm{CH}_{*+u_i-n}(X).
\end{displaymath}
Then $\alpha|_{E_{x_0}}=\Phi_X(\beta)|_{E_{x_0}}$, i.e., $(\alpha-\Phi_X(\beta))|_{E_{x_0}}=0$.
By Lemma \ref{point-open}, $(\alpha-\Phi_X(\beta))|_{E_U}=0$ for a nonempty open subvariety $U$ of $X$.
Set $Z=X-U$.
Notice that $i_{E_Z*}(\pi|_{E_Z})^*=\pi^*i_{Z*}$ on $\textrm{CH}_{*}(Z)$ (\cite[Eexample 17.4.1 (a)]{F}) and $i_{E_U}^*(\xi\cap\eta)=i_{E_U}^*(\xi)\cap i_{E_U}^*(\eta)$ for $\xi\in A^*(E)$, $\eta\in \textrm{CH}_*(E)$.
We have the commutative diagram of localization sequences
\begin{displaymath}
\small{\xymatrix{
 \bigoplus_{i=1}^{r}\textrm{CH}_{*+u_i-n}(Z)\ar[d]^{\Phi_Z} \ar[r]^{i_{Z*}}&
 \bigoplus_{i=1}^{r}\textrm{CH}_{*+u_i-n}(X)\ar[d]^{\Phi_X} \ar[r]^{i_U^*}&
 \bigoplus_{i=1}^{r}\textrm{CH}_{*+u_i-n}(U) \ar[d]^{\Phi_U}\ar[r]& 0\\
 \textrm{CH}_{*}(E_Z)    \ar[r]^{i_{E_Z*}}&
 \textrm{CH}_{*}(E)  \ar[r]^{i_{E_U}^*\quad} &
 \textrm{CH}_{*}(E_U)   \ar[r]& 0. }}
\end{displaymath}
So $\alpha-\Phi_X(\beta)=i_{E_Z*}(\zeta)$ for some $\zeta\in \textrm{CH}_{*}(E_Z)$.
Since  $\textrm{dim}_k Z<\textrm{dim}_k X$,  $\Phi_Z$ is surjective by induction.
There exists $\omega\in \bigoplus_{i=1}^{r}\textrm{CH}_{*+u_i-n}(Z)$ such that $\zeta=\Phi_Z(\omega)$.
Hence $\alpha=\Phi_X(\beta)+i_{E_Z*}(\Phi_Z(\omega))=\Phi_X(\beta+i_{Z*}(\omega))$.
We proved that $\Phi_X$ is surjective.
\end{proof}


\begin{cor}\label{ppp}
Let $X$ be a smooth complete variety  satisfying the K\"{u}nneth property \emph{(}resp. $\mathbb{Q}$-K\"{u}nneth property\emph{)}.
Then
\begin{displaymath}
\times:\emph{CH}_*(X)\otimes_{\mathbb{Z}}\emph{CH}_*(Y)\rightarrow \emph{CH}_*(X\times Y)
\end{displaymath}
\emph{(}resp.
\begin{displaymath}
\times:\emph{CH}_*(X)_{\mathbb{Q}}\otimes_{\mathbb{Q}}\emph{CH}_*(Y)_{\mathbb{Q}}\rightarrow \emph{CH}_*(X\times Y)_{\mathbb{Q}})
\end{displaymath}
is an isomorphism for any variety $Y$.
\end{cor}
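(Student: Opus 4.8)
The plan is to deduce this from the Leray--Hirsch isomorphism of Proposition \ref{L-H}, applied to the \emph{trivial} bundle $\mathrm{pr}_Y\colon X\times Y\to Y$ whose fibre is the given $X$. Observe first that surjectivity is nothing but the weak K\"unneth property itself; the genuine content is that the exterior product is an isomorphism, and this will fall out of Proposition \ref{L-H} in one stroke once the hypotheses are matched to the present situation (fibre $F=X$, base $Y$).

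First I would produce the distinguished classes. Since $X$ is smooth, complete and satisfies the weak K\"unneth property, the argument already used in the proof of Lemma \ref{point-open} shows that $\textrm{CH}_*(X)$ is a free abelian group (by \cite[Theorem 2.1 (ii)]{ES2}), and the finite K\"unneth decomposition of the diagonal forces it to have finite rank; via the isomorphism $\bullet\cap[X]\colon A^*(X)\xrightarrow{\sim}\textrm{CH}_{\textrm{dim}_kX-*}(X)$ the ring $A^*(X)$ is likewise free of finite rank. Choose a homogeneous $\mathbb{Z}$-basis $a_1,\dots,a_r$ of $A^*(X)$, say $a_i\in A^{u_i}(X)$, and set $e_i=\mathrm{pr}_X^*(a_i)\in A^{u_i}(X\times Y)$. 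For each $y\in Y(k)$ the fibre $E_y=X\times\{y\}$ is identified with $X$ in such a way that $\mathrm{pr}_X|_{E_y}$ is the identity; hence $e_i|_{E_y}=a_i$, and the restrictions $e_1|_{E_y},\dots,e_r|_{E_y}$ freely generate $A^*(E_y)=A^*(X)$. Thus Proposition \ref{L-H} applies, with $n=\textrm{dim}_k(X\times Y)-\textrm{dim}_kY=\textrm{dim}_kX$, and yields an isomorphism $\sum_{i=1}^r e_i\cap\mathrm{pr}_Y^*(\bullet)\colon\bigoplus_{i=1}^r\textrm{CH}_{*+u_i-n}(Y)\xrightarrow{\sim}\textrm{CH}_*(X\times Y)$.

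The last step is to recognise this isomorphism as the exterior product. Writing $\alpha_i=a_i\cap[X]$, the classes $\alpha_1,\dots,\alpha_r$ form a $\mathbb{Z}$-basis of $\textrm{CH}_*(X)$, so the assignment $(\beta_1,\dots,\beta_r)\mapsto\sum_i\alpha_i\otimes\beta_i$ gives a grading-compatible isomorphism $\bigoplus_{i=1}^r\textrm{CH}_{*+u_i-n}(Y)\xrightarrow{\sim}\textrm{CH}_*(X)\otimes_{\mathbb{Z}}\textrm{CH}_*(Y)$ (the $i$-th summand $\textrm{CH}_{*+u_i-n}(Y)$ matches $\mathbb{Z}\alpha_i\otimes\textrm{CH}_*(Y)$ because $\alpha_i$ has dimension $\textrm{dim}_kX-u_i$). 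Under this identification the Leray--Hirsch map becomes the exterior product, because $\mathrm{pr}_Y^*(\beta)=[X]\times\beta$ for the flat projection $\mathrm{pr}_Y$ and, by the compatibility of operational classes with exterior products (\cite[Chapter 17]{F}), $e_i\cap\mathrm{pr}_Y^*(\beta)=\mathrm{pr}_X^*(a_i)\cap([X]\times\beta)=(a_i\cap[X])\times\beta=\alpha_i\times\beta$. Hence $\times$ is an isomorphism. The rational case is identical, replacing $\textrm{CH}_*$ by $\textrm{CH}_*(\bullet)_{\mathbb{Q}}$ and invoking the weak $\mathbb{Q}$-K\"unneth property throughout.

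I expect the main obstacle to be bookkeeping rather than any deep input: the one point demanding genuine care is turning the \emph{internal} direct sum $\bigoplus_i\textrm{CH}_{*+u_i-n}(Y)$ of Proposition \ref{L-H} into the tensor product $\textrm{CH}_*(X)\otimes_{\mathbb{Z}}\textrm{CH}_*(Y)$, which requires knowing that the basis $\{\alpha_i\}$ is finite and that the degree shifts $u_i-n$ line up exactly with the grading of the exterior product. Everything else reduces to the single identity $e_i\cap\mathrm{pr}_Y^*(\beta)=\alpha_i\times\beta$, a standard compatibility in Fulton's operational formalism.
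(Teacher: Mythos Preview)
Your proposal is correct and follows essentially the same route as the paper: view $X\times Y\to Y$ as a trivial bundle with fibre $X$, use \cite[Theorem 2.1 (ii)]{ES2} to get a finite homogeneous basis of $A^*(X)\cong\textrm{CH}_*(X)$, pull it back via the first projection to obtain global classes satisfying the hypotheses of Proposition~\ref{L-H}, and then identify the resulting Leray--Hirsch isomorphism with the exterior product via the identity $\mathrm{pr}_X^*(a_i)\cap\mathrm{pr}_Y^*(\beta)=(a_i\cap[X])\times\beta$. The paper's proof is slightly terser but the argument is the same.
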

\begin{proof}
We only prove the integral cases.
By \cite[Theorem 2.1 (ii)]{ES2}, $\textrm{CH}_*(X)$ is a free abelian group.
Let  $\{\,\gamma_i\in\textrm{CH}_{p_i}(X)\,|\,1\leq i\leq r\,\}$ be a homogeneous  $\mathbb{Z}$-basis of  $\textrm{CH}_*(X)$.
Since $X$ is a smooth  variety, there exists $e_i\in A^{\textrm{dim}_kX-p_i}(X)$ such that $e_i\cap [X]=\gamma_i$ for $1\leq i\leq r$.
For any variety $Y$, let $pr_1:X\times Y\rightarrow X$ and $pr_2:X\times Y\rightarrow Y$ be projections from $X\times Y$ onto $X$ and $Y$ respectively.
For any $y\in Y(k)$, $(pr_1^*e_i)|_{X\times \{y\}}$, $\cdots$, $(pr_1^*e_r)|_{X\times \{y\}}$ is a basis of $\textrm{CH}_*(X\times \{y\})$.
We view  $pr_2:X\times Y\rightarrow Y$ as a fiber bundle over $Y$.
By Theorem \ref{L-H},
\begin{displaymath}
\sum_{i=1}^{r}\gamma_i \times \bullet=\sum_{i=1}^{r}pr_1^*e_i\cap pr_2^*(\bullet):\bigoplus_{i=1}^{r}\textrm{CH}_{*-p_i}(Y)\rightarrow \textrm{CH}_{*}(X\times Y)
\end{displaymath}
is an isomorphism, which implies the conclusion.
\end{proof}

\begin{cor}\label{pppp}
Let $X$ and $Y$ be smooth complete varieties  satisfying the K\"{u}nneth property \emph{(}resp.  $\mathbb{Q}$-K\"{u}nneth property\emph{)}.
Then $X\times Y$ satisfies the K\"{u}nneth property \emph{(}resp.  $\mathbb{Q}$-K\"{u}nneth property\emph{)}.
\end{cor}
\begin{proof}
We only prove the integral cases.
For any variety $Z$, we have the commutative diagram
\begin{equation}\label{commutative diagram}
\xymatrix{
\textrm{CH}_{*}(X)\otimes_{\mathbb{Z}}\textrm{CH}_*(Y)\otimes_{\mathbb{Z}}\textrm{CH}_*(Z)\ar[d]_{id\otimes \times} \ar[r]^{\quad\quad\times\otimes id}  & \textrm{CH}_{*}(X\times Y)\otimes_{\mathbb{Z}}\textrm{CH}_*(Z) \ar[d]^{\times} \\
 \textrm{CH}_{*}(X)\otimes_{\mathbb{Z}}\textrm{CH}_*(Y\times Z)\ar[r]^{\quad\quad\times}  & \textrm{CH}_{*}(X\times Y\times Z).}
\end{equation}
Since $X$ and $Y$  satisfy the  K\"{u}nneth property, the two rows and the left vertical map in (\ref{commutative diagram}) are all isomorphisms by Corollary \ref{ppp}.
So the right vertical map is also an isomorphism.
We complete the proof.
\end{proof}

\subsection{Algebraic fiber bundles of Leray-Hirsch type}

\begin{defn}\label{def}
An algebraic fiber bundle is said to be of \emph{Leray-Hirsch type} (resp. \emph{$\mathbb{Q}$-Leray-Hirsch type}), if it satisfies the assumptions in Theorem \ref{L-H}.
\end{defn}
Clearly, if an algebraic fiber bundle is  of Leray-Hirsch type, it is also  of  $\mathbb{Q}$-Leray-Hirsch type.
If the base variety $X$ is \emph{smooth},
that the fiber bundle $E$ is of  Leray-Hirsch type (resp. $\mathbb{Q}$-Leray-Hirsch type) means
that \emph{its general fiber $F$ is a smooth complete variety satisfying the  K\"{u}nneth property \emph{(}resp. $\mathbb{Q}$-K\"{u}nneth property\emph{)} and there exist cycle classes $e_1$, $\ldots$, $e_r$ in $\emph{CH}^*(E)$  \emph{(}resp. $\emph{CH}^*(E)_{\mathbb{Q}}$\emph{)} such that their restrictions $e_1|_{E_x}$, $\ldots$, $e_r|_{E_x}$ freely generate $\emph{CH}^*(E_x)$ \emph{(}resp. $\emph{CH}^*(E_x)_{\mathbb{Q}}$\emph{)}  for every point $x\in X(k)$.}
In such case,
\begin{displaymath}
\sum_{i=1}^{r}\pi^*(\bullet)\cup e_i:\bigoplus_{i=1}^{r}\textrm{CH}^{*-u_i}(X)\rightarrow \textrm{CH}^{*}(E)
\end{displaymath}
\textrm{(}resp.
\begin{displaymath}
\sum_{i=1}^{r}\pi^*(\bullet)\cup e_i:\bigoplus_{i=1}^{r}\textrm{CH}^{*-u_i}(X)_{\mathbb{Q}}\rightarrow \textrm{CH}^{*}(E)_{\mathbb{Q}})
\end{displaymath}
is an isomorphism, where $e_i\in \textrm{CH}^{u_i}(E)$ (resp. $\textrm{CH}^{u_i}(E)_{\mathbb{Q}}$).
Consider the commutative diagram
\begin{equation}\label{commu}
\xymatrix{
\bigoplus_{i=1}^{r}\textrm{CH}^{*-u_i}(X)_{\mathbb{Q}}\ar[d]_{\bigoplus_{i=1}^{r}\textrm{cl}_{X,\mathbb{Q}}^{*-u_i}} \quad\ar[r]^{\quad\quad\sum_{i=1}^{r}\pi^*(\bullet)\cdot e_i}  & \quad\textrm{CH}^{*}(E)_\mathbb{Q} \ar[d]^{\textrm{cl}_{E,\mathbb{Q}}^{*}} \\
 \bigoplus_{i=1}^{r}H^{2*-2u_i}(X,\mathbb{Q})\qquad \ar[r]^{\quad\quad\sum_{i=1}^{r}\pi^*(\bullet)\cup \textrm{cl}^{u_i}_{E,\mathbb{Q}}(e_i)}  & \quad H^{2*}(E,\mathbb{Q}).}
\end{equation}
For any $x\in X(\mathbb{C})$, since $E_x$ satisfies the $\mathbb{Q}$-K\"{u}nneth property,
$\textrm{cl}_{E_x}^*:\textrm{CH}^*(E_x)_{\mathbb{Q}}\rightarrow H^*(E_x,\mathbb{Q})$
is an isomorphism by the following Lemma \ref{chow-homology}, hence $\textrm{cl}^{u_1}_{E,\mathbb{Q}}(e_1)|_{E_x}$, $\ldots$, $\textrm{cl}^{u_r}_{E,\mathbb{Q}}(e_r)|_{E_x}$ is a basis of $H^*(E_x,\mathbb{Q})$.
By the Leray-Hirsch theorem for singular cohomology, the lower row in (\ref{commu}) is an isomorphism.
By Theorem \ref{L-H}, the upper row in (\ref{commu}) is an isomorphism.
We easily obtain an isomorphism
\begin{equation}\label{L-H-alg}
\sum\limits_{i=1}^{r}\pi^*(\bullet)\cup \textrm{cl}^{u_i}_{E,\mathbb{Q}}(e_i): \bigoplus_{i=1}^{r}H_{alg}^{*-2u_i}(X,\mathbb{Q})  \tilde{\rightarrow} H_{alg}^{*}(E,\mathbb{Q}).
\end{equation}

\begin{lem}\label{chow-homology}
Let $X$ be a smooth complex complete variety.
Assume that  class $\Delta\in \emph{CH}_*(X\times X)$ of the diagonal $X\times X$ is in the image of
$\times:\emph{CH}_*(X)_{\mathbb{Q}}\otimes_{\mathbb{Q}}\emph{CH}_*(X)_{\mathbb{Q}}\rightarrow \emph{CH}_*(X\times X)_{\mathbb{Q}}$.
Then the cycle map $\emph{cl}^*_{X,\mathbb{Q}}:\emph{CH}^*(X)_{\mathbb{Q}} \rightarrow H^* (X, \mathbb{Q})$ is an isomorphism.
\end{lem}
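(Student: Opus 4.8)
The plan is to read the hypothesis as a \emph{decomposition of the diagonal} and to run the classical correspondence argument on $\textrm{CH}^*(X)_{\mathbb{Q}}$ and $H^*(X,\mathbb{Q})$ in parallel. Write the image of $\Delta$ in $\textrm{CH}_*(X\times X)_{\mathbb{Q}}$ as $\Delta=\sum_{i=1}^{s}\gamma_i\times\delta_i$ with $\gamma_i,\delta_i\in\textrm{CH}_*(X)_{\mathbb{Q}}$; after splitting into homogeneous components I may assume each $\gamma_i$ has pure dimension $a_i$ and each $\delta_i$ pure dimension $b_i$ with $a_i+b_i=\textrm{dim}_{\mathbb{C}}X$, so that every $\gamma_i\times\delta_i$ carries the dimension of $\Delta$. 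Since $X$ is smooth and complete, so is $X\times X$, hence there is an intersection product and a degree map there, and any cycle on $X\times X$ acts as a correspondence $\beta\mapsto pr_{2*}\!\left(pr_1^*\beta\cdot\Gamma\right)$ on $\textrm{CH}_*(X)_{\mathbb{Q}}$, with the diagonal $\Delta$ acting as the identity.

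The core computation is the evaluation of the correspondence $(\gamma_i\times\delta_i)_*$. Using $pr_1^*\beta\cdot(\gamma_i\times\delta_i)=(\beta\cdot\gamma_i)\times\delta_i$ together with $pr_{2*}(u\times v)=\left(\int_X u\right)v$ (the degree being nonzero only when $u\in\textrm{CH}_0(X)_{\mathbb{Q}}$), I obtain for homogeneous $\beta$
\[
\beta=\Delta_*(\beta)=\sum_{i}\left(\int_X \beta\cdot\gamma_i\right)\delta_i ,
\]
where only the indices $i$ with $b_i=\textrm{dim}\,\beta$ contribute. Hence $\{\delta_i\}$ spans $\textrm{CH}_*(X)_{\mathbb{Q}}$. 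Transporting the same identity through the cycle map shows every $\alpha\in H^{*}(X,\mathbb{Q})$ equals $\sum_i\left(\int_X\alpha\cup\textrm{cl}^*_{X,\mathbb{Q}}(\gamma_i)\right)\textrm{cl}^*_{X,\mathbb{Q}}(\delta_i)$; by degree reasons only even-degree $\alpha$ survive, so $H^{\mathrm{odd}}(X,\mathbb{Q})=0$ and every even class is a $\mathbb{Q}$-combination of the algebraic classes $\textrm{cl}^*_{X,\mathbb{Q}}(\delta_i)$. This yields surjectivity of $\textrm{cl}^*_{X,\mathbb{Q}}$.

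For injectivity, suppose $\textrm{cl}^*_{X,\mathbb{Q}}(\beta)=0$. Plugging $\beta$ into the Chow formula above, each coefficient $\int_X\beta\cdot\gamma_i$ is the degree of a $0$-cycle and, by compatibility of the cycle map with intersection products, proper pushforward and the degree map, equals $\int_X\textrm{cl}^*_{X,\mathbb{Q}}(\beta)\cup\textrm{cl}^*_{X,\mathbb{Q}}(\gamma_i)=0$. Thus $\beta=\sum_i 0\cdot\delta_i=0$, so $\textrm{cl}^*_{X,\mathbb{Q}}$ is injective, hence an isomorphism. I expect the main obstacle to be the bookkeeping of the correspondence formalism on $X\times X$ and, above all, the verification of the parallel compatibilities of the cycle class map (with $\cdot$, with $pr_{2*}$, and with $\int_X$) that force the \emph{same} diagonal decomposition to govern $\textrm{CH}^*(X)_{\mathbb{Q}}$ and $H^*(X,\mathbb{Q})$ simultaneously; it is precisely the agreement of the numerical coefficients $\int_X\beta\cdot\gamma_i$ on the two sides that drives injectivity. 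These compatibilities are standard (Fulton, Chapter 19), but must be invoked carefully so that the pushforward and degree identities line up.
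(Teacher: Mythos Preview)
Your argument is correct and is essentially the same as the paper's: both write $\Delta=\sum_i\gamma_i\times\delta_i$, use the identity action of the diagonal as a correspondence to express every class in $\textrm{CH}^*(X)_{\mathbb{Q}}$ and in $H^*(X,\mathbb{Q})$ as a linear combination of the $\delta_i$ (resp.\ their images), and deduce injectivity from the fact that $\int_X\beta\cdot\gamma_i$ can be computed on the cohomological side. The only cosmetic differences are that the paper uses $pr_{1*}(\Delta\cdot pr_2^*\gamma)$ rather than $pr_{2*}(pr_1^*\beta\cdot\Delta)$ and spells out the ``homologically trivial $\Rightarrow$ numerically trivial'' step by clearing denominators, whereas you invoke the compatibility of $\textrm{cl}^*_{X,\mathbb{Q}}$ with products, pushforward, and degree directly.
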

\begin{proof}
Suppose that $\Delta=\sum_{i=1}^r c_i\cdot \alpha_i\times \beta_i$, where $\alpha_i$, $\beta_i\in \textrm{CH}_*(X)_{\mathbb{Q}}$ and $c_i\in\mathbb{Q}$ for $1\leq i\leq r$.
For any $\gamma\in \textrm{CH}_*(X)_{\mathbb{Q}}$,
\begin{equation}\label{diagonal}
\gamma=pr_{1*}(\Delta\cdot pr_2^*\gamma)=\sum_{i=1}^r c_i\left(\int_X \beta_i\cdot\gamma\right)\cdot \alpha_i.
\end{equation}

Assume that  $\textrm{cl}^*_{X,\mathbb{Q}}(\gamma)=0$.
Set $\gamma=\sum_{i=1}^l p_i [Z_i]^{alg}$, where $p_1$, $\cdots$, $p_l$ $\in\mathbb{Q}$ and $Z_1$, $\cdots$, $Z_l$ are subvarieties of $X$.
Then there exists $n\in\mathbb{Z}$ such that $np_1$, $\cdots$, $np_l$ $\in\mathbb{Z}$ and $\sum_{i=1}^l np_i Z_i$ is integral homological to zero, hence is numerical to zero.
So
\begin{displaymath}
\int_X \beta_i\cdot\gamma= \frac{1}{n}\int_X \beta_i\cdot\left(\sum_{i=1}^l np_i Z_i\right)=0.
\end{displaymath}
By (\ref{diagonal}), $\gamma=0$.
We proved that $\textrm{cl}^*_{X,\mathbb{Q}}$ is injective.

Denote  by $\tilde{\Delta}$, $\tilde{\alpha}_i$, $\tilde{\beta}_i$ the corresponding class in $H^* (X, \mathbb{Q})$ of $\Delta$, $\alpha_i$, $\beta_i$ respectively.
Then  $\tilde{\Delta}=\sum_{i=1}^r c_i\cdot \tilde{\alpha}_i\times \tilde{\beta}_i$.
Analogue to (\ref{diagonal}),
\begin{displaymath}
u=\sum_{i=1}^r c_i\left(\int_X \tilde{\beta}_i\cup u\right)\cdot \tilde{\alpha}_i=\textrm{cl}^*_{X,\mathbb{Q}}\left(\sum_{i=1}^r c_i\left(\int_X \tilde{\beta}_i\cup u\right)\cdot \alpha_i\right)
\end{displaymath}
for any $u\in H^*(X,\mathbb{Q})$, which implies that $\textrm{cl}^*_{X,\mathbb{Q}}$ is surjective.
\end{proof}

Assume that $X$ is a smooth complex projective variety.  Now, we list some examples of algebraic fiber bundles of Leray-Hirsch type as follows.

$\bullet$ \emph{The trivial fiber bundle $X\times F$ over $X$, where $F$ is a smooth complex complete variety satisfying the  K\"{u}nneth property.}
Obviously, its general fiber is $F$.
Let $pr_1:X\times F\rightarrow X$ and $pr_2:X\times F\rightarrow F$ be projections from $X\times F$ onto $X$ and $F$ respectively.
Assume that  $\{\,f_i\,|\,1\leq i\leq r\,\}$ is a homogeneous  $\mathbb{Z}$-basis of  $\textrm{CH}_*(F)$.
For any $x\in X(k)$, $(pr_2^*f_1)|_{\{x\}\times F}$, $\cdots$, $(pr_2^*f_r)|_{\{x\}\times F}$ is a basis of $\textrm{CH}^*(\{x\}\times F)$.

$\bullet$ \emph{A quadric bundle associated to a vector bundle with a nondegenerate hyperbolic quadric form over  $X$.}
Let $V$ be a vector bundle of rank $N$ ($=2n$ or $2n+1$) with a nondegenerate hyperbolic quadric form over a smooth projective variety $X$  and let $Q\subseteq \mathbb{P}(V)$ be the associated quadric bundle.
The fiber over any closed point of $X$ is a hyperbolic quadric in $\mathbb{C}P^{N-1}$, which has a cellular decomposition.
Set $h=c_1(\mathcal{O}_{\mathbb{P}(V)}(1))|_{Q}\in \textrm{CH}^1(Q)$ and denote by $\gamma\in \textrm{CH}^n(Q)$ the class of a maximal isotropic subbundle.
Then $1$, $h$, $h^2$, \ldots, $h^{n-1}$, $\gamma$, $h\gamma$, \ldots, $h^{n-1}\gamma$ is a basis of $\textrm{CH}^*(Q)$ as a $\textrm{CH}^*(X)$-module (\cite[Theorem 7]{EG1}), whose restrictions  freely generate $\textrm{CH}^*(Q_x)$  for any $x\in X(\mathbb{C})$ (\cite[Lemma 1]{EG1}).

$\bullet$ \emph{A flag bundle  over $X$.}
The general fiber of a flag bundle is a flag variety, which is a cellular variety.
Suppose that $V$ is an algebraic vector bundle with rank $n$ over $X$ and $n_1$, $\ldots$, $n_r$ is a sequence of positive integers with $\sum_{i=1}^rn_i=n$, such that the fiber $E_x$ of $E$ over $x\in X(\mathbb{C})$ is the flag variety $Fl(n_1,\ldots,n_r)(V_x)=$
\begin{displaymath}
\begin{aligned}
\{(V_1,\ldots,V_{r-1})|&0=V_0\subsetneq V_1\subsetneq V_2\subsetneq \ldots \subsetneq V_{r-1}\subsetneq V_r=V_x, \mbox{ where } V_i \mbox{ is a } \\
& \mbox{ complex vector space with dimension } \sum_{j=1}^{i}n_j \mbox{  for } 1\leq i\leq r\}.
\end{aligned}
\end{displaymath}
For $0\leq i\leq r$, let $E_i$ be the  universal subbundle over $E$ whose fiber over the point $(V_1,\ldots,V_{r-1})$ is $V_i$. Notice that $E_0=0$ and $E_r=\pi^*V$, where $\pi$ is the projection form $F$ onto $X$. Denote by $E^{(i)}=E_i/E_{i-1}$ the successive  universal quotient bundles  and by $c_j^{(i)}=c_j(E^{(i)})\in \textrm{CH}^*(E)$ the $j$-th Chern class of $E^{(i)}$ for $1\leq i\leq r$. As we know, $\textrm{CH}^*(E)$ is a  $\textrm{CH}^*(X)$-module via $\pi^*$ freely generated by the monomials $e_1$, $\ldots$, $e_k$ on $c_1^{(1)}$, $\ldots$, $c_{n_1}^{(1)}$, $c_1^{(2)}$, $\ldots$, $c_{n_2}^{(2)}$, $\ldots$, $c_1^{(r)}$, $\ldots$, $c_{n_r}^{(r)}$. For every $x\in X$, the restrictions $c_{j_i}^{(i)}|_{E_x}$ ($1\leq i\leq r$, $1\leq j_i\leq n_i$) to $E_x$ are exactly the Chern classes of the successive  universal quotient bundles of the flag variety $E_x$. So $e_1|_{E_x}$, $\ldots$, $e_k|_{E_x}$ freely generate the Chow ring $\textrm{CH}^*(E_x)$ as an abelian group.

\section{Hodge and Standard Conjectures for Fiber bundles}



\begin{lem}\label{surj-lem-C}
Let $f:X\rightarrow Y$ be a surjective morphism of smooth complex projective varieties. Then $C(X)$ implies $C(Y)$.
\end{lem}
\begin{proof}
Set $r=\textrm{dim}_{\mathbb{C}}X-\textrm{dim}_{\mathbb{C}}Y$.
There exists a subvariety $V$ of $X$ of codimension $r$ such that $f|_V:V\rightarrow Y$ is a surjective morphism of projective varieties with the same dimensions. Suppose that $f|_V$ is of degree $d$. Then $f_*[V]^{top}=d\cdot[Y]^{top}$.
By the projection formula,
\begin{equation}\label{proformula}
(f\times f)_*\left([V\times V]^{top}\cup (f\times f)^*(\alpha)\right)=d^2\cdot\alpha
\end{equation}
for any $\alpha\in H_{alg}^{*}(Y\times Y,\mathbb{Q})$.
Let $Z$ be any algebraic cycle of codimension $k$ with rational coefficients on $Y\times Y$.
Denote by $S_p\in H^p(X,\mathbb{Q})\otimes_{\mathbb{Q}} H^{2k-p}(X,\mathbb{Q})$ the K\"{u}nneth components of $(f\times f)^*([Z]^{top})\in H_{alg}^{*}(X\times X,\mathbb{Q})$.
Since $C(X)$, $S_p$ for all $p$ are algebraic by \cite[Proposition 2.6]{K}.
So the K\"{u}nneth components of $[Z]^{top}$ in  $H^p(Y,\mathbb{Q})\otimes_{\mathbb{Q}} H^{2k-p}(Y,\mathbb{Q})$
is $\frac{1}{d^2}\cdot(f\times f)_*\left([V\times V]^{top}\cup S_p\right)$, which is obviously algebraic.
By \cite[Proposition 2.6]{K}, $C(Y)$.
\end{proof}

Now, we provide a proof of Theorem \ref{L-H2}.
\begin{proof}
Let $\pi:E\rightarrow X$ be the algebraic fiber bundle and let $F$ be its general fiber.
Set $n=\textrm{dim}_{\mathbb{C}}X$ and $m=\textrm{dim}_{\mathbb{C}}F$.

$(1)$
Suppose that $V_1$, $\ldots$, $V_r$ are algebraic cycles with integral coefficients on $E$  such that their restrictions $[V_1]^{alg}|_{E_x}$, $\ldots$, $[V_r]^{alg}|_{E_x}$ freely generate $\textrm{CH}^*(E_x)$  for every point $x\in X(\mathbb{C})$.
Set $\textrm{codim}_{\mathbb{C}}V_i=u_i$ for $1\leq i\leq r$.
Consider the commutative diagram
\begin{displaymath}
\xymatrix{
\bigoplus_{i=1}^{r}\textrm{CH}^{*-u_i}(X)\ar[d]_{\bigoplus_{i=1}^{r}\lambda^{*-u_i}_X} \quad\ar[r]^{\quad\quad\sum_{i=1}^{r}\pi^*(\bullet)\cdot [V_i]^{alg}}  & \quad\textrm{CH}^{*}(E) \ar[d]^{\lambda^*_{E}} \\
 \bigoplus_{i=1}^{r}Hdg^{*-u_i}(X,\mathbb{Z})\qquad \ar[r]^{\quad\quad\sum_{i=1}^{r}\pi^*(\bullet)\cup [V_i]^{top}}  & \quad Hdg^*(E,\mathbb{Z}).}
\end{displaymath}
For any $x\in X(\mathbb{C})$, since $E_x$ satisfies the K\"{u}nneth property,  $\textrm{cl}_{E_x}^*:\textrm{CH}^*(E_x)\rightarrow H^*(E_x,\mathbb{Z})$ is an isomorphism by \cite[Theorem 2.1 (iii)]{ES2}, hence $[V_1]^{top}|_{E_x}$, $\ldots$, $[V_r]^{top}|_{E_x}$ freely generate $H^*(E_x,\mathbb{Z})$.
The lower row is an isomorphism by the Hodge decompositions and the Leray-Hirsch theorem for singular cohomology.
By Theorem \ref{L-H}, the upper row is an isomorphism.
Hence $\lambda^*_X$ is surjective if and only if  $\lambda^*_E$ is surjective.

$(2)$  Let  $e_1$, $\ldots$, $e_r\in \textrm{CH}^*(E)_{\mathbb{Q}}$ be of pure degrees $u_1$ , $\ldots$, $u_r$ respectively  such that their restrictions $e_1|_{E_x}$, $\ldots$, $e_r|_{E_x}$ is a basis of  $\textrm{CH}^*(E_x)_{\mathbb{Q}}$  for every point $x\in X(\mathbb{C})$.

$A(E)\Rightarrow A(X)$: See \cite[Theorem 1.5]{Tan}.

$A(X)\Rightarrow A(E)$:
Set  $h^p(\bullet)=\textrm{dim}_{\mathbb{Q}}H_{alg}^{2p}(\bullet,\mathbb{Q})$.
By (\ref{L-H-alg}),  $h^p(E)=\sum_{i=0}^rh^{p-u_i}(X)$ for any $p\in \mathbb{N}$.
By  Lemma \ref{chow-homology}, $\textrm{dim}_{\mathbb{Q}}\textrm{CH}^j(E_x)_{\mathbb{Q}}=b_{2j}(E_x)=b_{2j}(F)$, where $b_i(\bullet)$ is the $i$-th Betti number.
The cardinal of $\{\,u_i\,|\,u_i=j,\,1\leq i\leq r\,\}$ is $b_{2j}(F)$,  since $e_1|_{E_x}$, $\ldots$, $e_r|_{E_x}$ is a basis of $\textrm{CH}^*(E_x)_{\mathbb{Q}}$.
Hence
\begin{equation}\label{Kunnthe-alg}
h^p(E)=\sum_{j=0}^n\sum\limits_{\substack{1\leq i\leq r\\u_i=j}}h^{p-u_i}(X)=\sum_{j=0}^nb_{2j}(F)h^{p-j}(X)
\end{equation}
for any $p\in \mathbb{N}$.
Since $A(X)$, i.e., $h^j(X)=h^{n-j}(X)$ for all $j\in \mathbb{N}$.
By the Poincar\'{e} duality theorem,  $b_{2j}(F)=b_{2m-2j}(F)$ for all $j$.
By (\ref{Kunnthe-alg}), we easily get $h^{m+n-p}(E)=h^p(E)$.
So $A(E)$.

$A(\bullet)\Leftrightarrow D(\bullet)$: See \cite[Theorem 1]{L}.

$B(E)\Rightarrow B(X)$: See \cite[Theorem 1.6]{Tan}.

$B(X)\Rightarrow B(E)$:
Assume that $B(X)$.
By \cite[p. 603 (1.11)]{Tan}, $A(X\times X)$.
Clearly, $\pi\times id: E\times X\rightarrow X\times X$ is an algebraic fiber bundle  with the general fiber $F$.
For every point $(x,y)\in (X\times X)(\mathbb{C})=X(\mathbb{C})\times X(\mathbb{C})$,
$(e_1\times [X]^{alg})|_{E_x\times \{y\}}$, $\ldots$, $(e_r\times [X]^{alg})|_{E_x\times \{y\}}$ is a basis of $\textrm{CH}^*(E_x\times \{y\})_{\mathbb{Q}}$.
So $A(E\times X)$.
Similarly, $id\times \pi: E\times E\rightarrow E\times X$ is an algebraic fiber bundle  with the general fiber $F$.
For every point $(v,y)\in (E\times X)(\mathbb{C})=E(\mathbb{C})\times X(\mathbb{C})$,
$([E]^{alg}\times e_1)|_{\{v\}\times E_y}$, $\ldots$, $([E]^{alg}\times e_r)|_{\{v\}\times E_y}$ is a basis of $\textrm{CH}^*(\{v\}\times E_y)_{\mathbb{Q}}$.
Hence $A(E\times E)$.
By \cite[p. 603 (1.11)]{Tan}, $B(E)$.

$C(E)\Rightarrow C(X)$: See Lemma \ref{surj-lem-C}.

$C(X)\Rightarrow C(E)$: For any $(x, y)\in (X\times X)(\mathbb{C})=X(\mathbb{C})\times X(\mathbb{C})$, $E_x$ and $E_y$ satisfy the  $\mathbb{Q}$-K\"{u}nneth property.
By Corollary \ref{ppp}, $\times:\textrm{CH}^*(E_x)_{\mathbb{Q}}\otimes_{\mathbb{Q}} \textrm{CH}^*(E_y)_{\mathbb{Q}}\rightarrow\textrm{CH}^*(E_x\times E_y)_{\mathbb{Q}}$ is an isomorphism.
So $\{\,(e_i\times e_j)|_{E_x\times E_y}\,|\,1\leq i\leq r,\,1\leq j\leq r\,\}$ is a basis of $\textrm{CH}^*((E\times E)_{(x,y)})_{\mathbb{Q}}=\textrm{CH}^*(E_x\times E_y)_{\mathbb{Q}}$.
By Corollary \ref{pppp}, $E_x\times E_y$ satisfy the  $\mathbb{Q}$-K\"{u}nneth property.
By (\ref{L-H-alg}),
\begin{displaymath}
\sum_{i,j=1}^{r}(\pi\times\pi)^*(\bullet)\cup(e_i\times e_j):\bigoplus_{i,j=1}^{r}H_{alg}^{*-2u_i-2u_j}(X\times X,\mathbb{Q})\rightarrow H_{alg}^{*}(E\times E,\mathbb{Q})
\end{displaymath}
is an isomorphism.
Let $Z$ be any  algebraic cycle of codimension $k$  with rational coefficients on $E\times E$.
There exist $\alpha_{ij}\in H_{alg}^{2k-2u_i-2u_j}(X\times X,\mathbb{Q})$ such that $[Z]^{top}=\sum_{i,j=1}^{r}(\pi\times\pi)^*(\alpha_{ij})\cup (e_i\times e_j)$.
Denote by $[Z]_p\in H^p(E,\mathbb{Q})\otimes_{\mathbb{Q}} H^{2k-p}(E,\mathbb{Q})$ and  $\alpha^p_{ij}\in H^p(X,\mathbb{Q})\otimes_{\mathbb{Q}} H^{2k-2u_i-2u_j-p}(X,\mathbb{Q})$ the K\"{u}nneth components of $[Z]^{top}$ and  $\alpha_{ij}$ respectively.
Then $[Z]_p=\sum_{i,j=1}^{r}(\pi\times\pi)^*(\alpha^{p-2u_i}_{ij})\cup (e_i\times e_j)$.
Since $C(X)$, $\alpha^p_{ij}$ are algebraic for all $i$, $j$, $p$ by \cite[Proposition 2.6]{K}, so are $[Z]_p$ for all $p$.
By \cite[Proposition 2.6]{K}, $C(E)$.

$\textrm{Hodge}(X,\mathbb{Q})\Leftrightarrow \textrm{Hodge}(E,\mathbb{Q})$: As $(1)$, we can give a similar proof by Lemma \ref{chow-homology} instead of \cite[Theorem 2.1 (iii)]{ES2}.

\end{proof}

\section{Hodge conjecture for product varieties}
The following Lemma \ref{surj-lem} may be well known for experts. We do not find the reference and hence  give a simple proof here.
\begin{lem}\label{surj-lem}
Let $f:X\rightarrow Y$ be a surjective morphism of smooth complex projective varieties. Then $\emph{Hodge}(X,\mathbb{Q})$ implies $\emph{Hodge}(Y,\mathbb{Q})$. Moreover, if there exists an algebraic cycle $V$ with integral coefficients on $X$ with pure codimension  $r=\emph{dim}_{\mathbb{C}}X-\emph{dim}_{\mathbb{C}}Y$ such that $f_*[V]^{top}=[Y]^{top}$ in $H^0(Y,\mathbb{Z})=\mathbb{Z}[Y]^{top}$, then $\emph{Hodge}(X,\mathbb{Z})$ implies $\emph{Hodge}(Y,\mathbb{Z})$.
\end{lem}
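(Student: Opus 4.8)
The plan is to exploit the interplay between pullback and pushforward in cohomology, together with the fact that $f^*$ is a morphism of Hodge structures. Given a Hodge class $\alpha\in Hdg^p(Y,\mathbb{Q})$ (resp. $Hdg^p(Y,\mathbb{Z})$), its pullback $f^*\alpha$ again lies in $Hdg^p(X,\mathbb{Q})$ (resp. $Hdg^p(X,\mathbb{Z})$), since $f^*$ respects both the rational (integral) structure and the Hodge decomposition. Assuming the Hodge conjecture on $X$, the class $f^*\alpha$ is therefore algebraic. I would then recover $\alpha$ itself as a proper pushforward of an algebraic class, using the projection formula $f_*(f^*\alpha\cup\gamma)=\alpha\cup f_*\gamma$ and the compatibility of the cycle map with proper pushforward, so that $f_*$ carries algebraic classes to algebraic classes.

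For part $(1)$, first I would produce a cycle that makes the pushforward nondegenerate. Fix a very ample line bundle on $X$ and let $W$ be the intersection of $r$ general members of its linear system, where $r=\dim_{\mathbb{C}}X-\dim_{\mathbb{C}}Y$. By Bertini together with a dimension count on the $r$-dimensional general fibers of $f$, the restriction $g=f|_W\colon W\to Y$ is a surjective, generically finite morphism of some degree $d>0$. Writing $j\colon W\hookrightarrow X$ for the inclusion and using $g_*g^*=d\cdot\mathrm{id}$ on cohomology together with $j_*j^*(f^*\alpha)=f^*\alpha\cup[W]^{top}$, one obtains
\begin{displaymath}
d\,\alpha=g_*g^*\alpha=f_*\!\left(f^*\alpha\cup[W]^{top}\right).
\end{displaymath}
Since $f^*\alpha$ is algebraic and $[W]^{top}$ is the class of a subvariety, the right-hand side is algebraic, so $\alpha=\tfrac{1}{d}f_*(f^*\alpha\cup[W]^{top})$ is algebraic with rational coefficients. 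This gives $\mathrm{Hodge}(Y,\mathbb{Q})$.

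For part $(2)$, the hypothesis on $V$ removes the denominator $d$ and hence works integrally. Taking $\gamma=[V]^{top}\in H^{2r}(X,\mathbb{Z})$ in the projection formula and using $f_*[V]^{top}=[Y]^{top}=\mathbf{1}$, I would obtain directly
\begin{displaymath}
\alpha=\alpha\cup[Y]^{top}=\alpha\cup f_*[V]^{top}=f_*\!\left(f^*\alpha\cup[V]^{top}\right).
\end{displaymath}
Now $f^*\alpha$ is algebraic with integral coefficients by $\mathrm{Hodge}(X,\mathbb{Z})$, and $[V]^{top}$ is integral algebraic since $V$ is an integral cycle; their product and its proper pushforward are therefore integral algebraic. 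Thus $\alpha$ is algebraic, proving $\mathrm{Hodge}(Y,\mathbb{Z})$.

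The only genuine difficulty is the integral bookkeeping. In part $(1)$ the multisection $W$ exists for any surjective $f$, but its pushforward satisfies $f_*[W]^{top}=d\,[Y]^{top}$ only up to the positive integer $d$, which is harmless rationally but fatal integrally; this is exactly why part $(2)$ must assume a cycle $V$ with $f_*[V]^{top}=[Y]^{top}$ on the nose. I expect the main step to verify carefully is that $g=f|_W$ is genuinely surjective and generically finite, so that $d>0$, which follows from a standard Bertini and general-position argument on the general fibers of $f$; the remaining manipulations are formal consequences of the projection formula and the functoriality of the cycle class map.
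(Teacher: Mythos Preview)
Your proof is correct and follows essentially the same route as the paper: pull back the Hodge class, use $\mathrm{Hodge}(X)$ to make it algebraic, cup with the class of a codimension-$r$ multisection, and push forward via the projection formula $f_*(f^*\alpha\cup[V]^{top})=\alpha\cup f_*[V]^{top}=d\,\alpha$. The only difference is cosmetic: you construct the multisection explicitly via Bertini, whereas the paper simply asserts the existence of a subvariety $V\subset X$ with $f|_V:V\to Y$ surjective and generically finite of degree $d$.
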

\begin{proof}

Assume that $\textrm{Hodge}(X,\mathbb{Q})$. Let $V$, $d$ be the ones in the proof of Lemma \ref{surj-lem-C}.
For any $\alpha\in Hdg^{p}(Y,\mathbb{Q})$, $f^*\alpha=[Z]^{top}$ for an algebraic cycle $Z$ with $\mathbb{Q}$-coefficients on $X$, since $\textrm{Hodge}(X,\mathbb{Q})$.  By the projection formula,
\begin{equation}\label{proformula}
\alpha=1/d\cdot f_*([V]^{top}\cup f^*\alpha)=1/d\cdot \textrm{cl}_Y^{p+r}(f_*([V]^{alg}\cdot [Z]^{alg}))
\end{equation}
is algebraic. So $\textrm{Hodge}(Y,\mathbb{Q})$. The first part  was proved. These arguments are also valid for proving the second part, where we use $\mathbb{Z}$ instead of $\mathbb{Q}$ and notice that $d=1$.
\end{proof}

\begin{lem}\label{L-H1}
Let $X$, $E$ be smooth complex projective varieties and let $\pi:E\rightarrow X$ be an algebraic fiber bundle.
Suppose that there exist algebraic classes $e_1$, $\ldots$, $e_r$ in $H^*(E,\mathbb{Z})$ \emph{(}resp. $H^*(E,\mathbb{Q})$\emph{)}  such that their restrictions $e_1|_{E_x}$, $\ldots$, $e_r|_{E_x}$ freely generate $H^*(E_x,\mathbb{Z})$ \emph{(}resp. $H^*(E_x,\mathbb{Q})$\emph{)} for every $x\in X(\mathbb{C})$. Then $\emph{Hodge}(X,\mathbb{Z})$ implies $\emph{Hodge}(E,\mathbb{Z})$ \emph{(}resp. $\emph{Hodge}(E,\mathbb{Q})$ if and only if $\emph{Hodge}(X,\mathbb{Q})$\emph{)}.
\end{lem}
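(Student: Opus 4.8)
The plan is to establish the topological Leray–Hirsch isomorphism and then combine it with the Hodge–theoretic structure exactly as in the proof of Theorem \ref{L-H2}, but now working entirely with Betti cohomology. First I would fix algebraic cycles $V_1,\ldots,V_r$ on $E$ representing $e_1,\ldots,e_r$ and assume each $e_i$ is of pure (even) degree $2u_i$; the hypothesis that the $e_i|_{E_x}$ freely generate $H^*(E_x,\mathbb{Z})$ (resp. $H^*(E_x,\mathbb{Q})$) for every $x$ is precisely the fiberwise hypothesis in the classical Leray–Hirsch theorem for fiber bundles. The key step is to prove that
\begin{displaymath}
\sum_{i=1}^{r}\pi^*(\bullet)\cup e_i:\bigoplus_{i=1}^{r}H^{*-2u_i}(X,\mathbb{Z})\rightarrow H^{*}(E,\mathbb{Z})
\end{displaymath}
is an isomorphism, and similarly with $\mathbb{Q}$-coefficients. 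This is the purely topological Leray–Hirsch theorem: since $\pi$ is a locally trivial fiber bundle (in the Zariski, hence analytic, topology) whose fiber cohomology is free and finitely generated with a basis of global classes, the Leray spectral sequence degenerates and yields the stated isomorphism of $H^*(X)$-modules.

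Granting this isomorphism, I would next verify that it respects the Hodge structures. Each $e_i$ is algebraic, hence of type $(u_i,u_i)$, so cup product with $e_i$ shifts Hodge bidegree by $(u_i,u_i)$; consequently the isomorphism above restricts to an isomorphism
\begin{displaymath}
\sum_{i=1}^{r}\pi^*(\bullet)\cup e_i:\bigoplus_{i=1}^{r}Hdg^{*-u_i}(X,\mathbb{Z})\xrightarrow{\;\sim\;}Hdg^{*}(E,\mathbb{Z})
\end{displaymath}
on the level of Hodge classes, and likewise rationally. Here I would assemble the commutative square whose top row is the integral Leray–Hirsch map on Chow groups from Proposition \ref{L-H}, whose bottom row is the above map on Hodge classes, and whose vertical arrows are the refined cycle maps $\lambda_X$ and $\lambda_E$; commutativity follows from $\lambda^*_E(\pi^*\beta\cdot[V_i]^{alg})=\pi^*\lambda_X^*(\beta)\cup[V_i]^{top}$, i.e. compatibility of the cycle map with pullback and with cup/intersection product.

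Finally I would read off the conclusion by a diagonal chase. Since the bottom horizontal map is an isomorphism, $\lambda_E^*$ is surjective if and only if each $\lambda_X^{*-u_i}$ is surjective onto $Hdg^{*-u_i}(X,\mathbb{Z})$, which (ranging over all degrees) is exactly surjectivity of $\lambda_X^*$; thus $\textrm{Hodge}(X,\mathbb{Z})$ implies $\textrm{Hodge}(E,\mathbb{Z})$, and in the rational case the equivalence runs in both directions. The main obstacle I anticipate is the topological Leray–Hirsch isomorphism itself: one must check that the fiberwise freeness hypothesis, stated pointwise over every $x\in X(\mathbb{C})$, genuinely forces degeneration of the Leray spectral sequence with the global classes $e_i$ restricting to a fiber basis, and that the resulting module isomorphism is compatible with the Hodge filtration so that it descends to Hodge classes; the Chow-level input from Proposition \ref{L-H} does not transfer for free, since the Künneth theorem fails for Chow groups, so the topological statement must be argued separately from its algebraic counterpart.
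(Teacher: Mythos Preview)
Your forward direction is essentially the paper's argument: the topological Leray--Hirsch theorem plus the fact that each $e_i$ is algebraic of type $(u_i,u_i)$ yields the isomorphism
\[
\sum_{i=1}^{r}\pi^*(\bullet)\cup e_i:\bigoplus_{i=1}^{r}Hdg^{*-u_i}(X,\mathbb{Z})\xrightarrow{\;\sim\;}Hdg^{*}(E,\mathbb{Z}),
\]
from which $\textrm{Hodge}(X,\mathbb{Z})\Rightarrow\textrm{Hodge}(E,\mathbb{Z})$ (and likewise over $\mathbb{Q}$) follows immediately, since pullbacks and cup products of algebraic classes are algebraic.

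The gap is in the converse direction of the rational case. Your diagonal chase does not give ``if and only if'': in a commutative square where only the bottom row is an isomorphism, surjectivity of the left vertical arrow implies surjectivity of the right one, but not conversely. To get the converse from the square you would need the top row (the Chow-group map) to be an isomorphism as well, and for that you invoke Proposition~\ref{L-H}; but Proposition~\ref{L-H} requires the fiber to satisfy the weak K\"unneth property, a hypothesis that is \emph{not} present in Lemma~\ref{L-H1}. So the top row is merely a map here, and the chase collapses. The paper instead obtains $\textrm{Hodge}(E,\mathbb{Q})\Rightarrow\textrm{Hodge}(X,\mathbb{Q})$ by a completely different route: it appeals to Lemma~\ref{surj-lem}, a projection-formula argument showing that for any surjective morphism $f:E\to X$ of smooth projective varieties, $\textrm{Hodge}(E,\mathbb{Q})$ implies $\textrm{Hodge}(X,\mathbb{Q})$. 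You should replace the backward half of your diagonal chase with this (or an equivalent) argument.
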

\begin{proof}
Assume that  $e_i$ is of pure degree $2u_i$ for $1\leq i\leq r$. By the Hodge decompositions and the Leray-Hirsch theorem for singular cohomology, we have an isomorphism
\begin{displaymath}
\sum_{i=1}^{r}\pi^*(\bullet)\cup e_i:\bigoplus_{i=1}^{r}Hdg^{*-u_i}(X,\mathbb{Z})\tilde{\rightarrow} Hdg^*(E,\mathbb{Z}).
\end{displaymath}
Evidently, $\textrm{Hodge}(X,\mathbb{Z})$ implies $\textrm{Hodge}(E,\mathbb{Z})$.
The argument also apply to the case of the Hodge conjeture.
By Lemma \ref{surj-lem}, we complete the proof.
\end{proof}

Now, we give a proof of Theorem \ref{product}.
\begin{proof}

Let $pr_1:X\times Y\rightarrow X$ and $pr_2:X\times Y\rightarrow Y$ be projections from $X\times Y$ onto $X$ and $Y$ respectively.

$(1)$ Fixed a point $y_0\in Y(\mathbb{C})$, $pr_{1*}([X\times \{y_0\}]^{top})=[X]^{top}$ in $H^0(X,\mathbb{Z})$. By Lemma \ref{surj-lem}, we get the part of ``only if".
View $X\times Y$ as a trivial algebraic fiber bundle with general fiber $Y$ over $X$. Let $H^*(Y,\mathbb{Z})$ be freely generated by algebraic classes $\alpha_1$, $\ldots$, $\alpha_r$  as an abelian group.
Then $e_1|_{\{x\}\times Y}$, $\ldots$, $e_r|_{\{x\}\times Y}$ freely generate $H^*(\{x\}\times Y,\mathbb{Z})$ for any $x\in X(\mathbb{C})$,  where $e_i=pr_2^*\alpha_i$ are all algebraic classes for $1\leq i\leq r$.  By Lemma \ref{L-H1},  the part of ``if" holds.

$(2)$ By Lemma \ref{surj-lem}, the part of ``only if" holds. For the other part, we first prove a claim:
The exterior product $\bullet\times\bullet:=\emph{pr}_1^*(\bullet)\cup\emph{pr}_2^*(\bullet)$ gives an isomorphism
\begin{equation}\label{Kunneth-Hdg}
\bigoplus_{p+q=k}Hdg^p(X,\mathbb{Q})\otimes_{\mathbb{Q}}Hdg^q(Y,\mathbb{Q})\rightarrow Hdg^k(X\times Y,\mathbb{Q})
\end{equation}
for any $k$.

Obviously, the map (\ref{Kunneth-Hdg}) is defined well and hence injective by the K\"{u}nneth formula of cohomology with $\mathbb{Q}$-coefficients. We just need to prove that it is surjective. Let $\alpha$ be any class in $Hdg^{k}(X\times Y,\mathbb{Q})$. By the Hodge decomposition, $H^{2q+1}(Y,\mathbb{C})=0$ and $H^{2q}(Y,\mathbb{C})=H^{q,q}(Y)$. Then $H^{2q+1}(Y,\mathbb{Q})=0$ and $H^{2q}(Y,\mathbb{Q})=Hdg^{q}(Y,\mathbb{Q})$. Let $\sigma_1^{q,q}$, $\ldots$, $\sigma_{t_q}^{q,q}$ be a basis of $Hdg^{q}(Y,\mathbb{Q})$.
There exist $\tau_1^{2p}$, $\ldots$, $\tau_{t_q}^{2q}\in H^{2p}(X,\mathbb{Q})$ for $0\leq q\leq k$ such that
\begin{equation}\label{rep1}
\alpha=\sum_{p+q=k}\sum_{i=0}^{t_q}\tau_i^{2p}\times\sigma_i^{q,q}
\end{equation}
by the K\"{u}nneth formula of cohomology with $\mathbb{Q}$-coefficients. Set  $\tau_i^{2p}=\sum_{r+s=2p} \tau_i^{r,s}$, where $\tau_i^{r,s}\in H^{r,s}(X)$. 
Comparing the bidegree in (\ref{rep1}),
\begin{equation}\label{rep3}
\alpha=\sum_{p+q=k}\sum_{i=0}^{t_q} \tau_i^{p,p}\times\sigma_i^{q,q} \mbox{ in $H^{k,k}(X\times Y)$}
\end{equation}
and
\begin{equation}\label{rep4}
\sum\limits_{\substack{r+q=u\\s+q=v}}\sum_{i=0}^{t_q} \tau_i^{r,s}\times\sigma_i^{q,q}=0 \mbox{ in $H^{u,v}(X\times Y)$ for $u\neq v$},
\end{equation}
where $u+v=2k$.  By the K\"{u}nneth formula of Dolbeault cohomology,
\begin{displaymath}
\sum_{i=0}^{t_q} \tau_i^{r,s}\otimes\sigma_i^{q,q}=0 \mbox{ in $H^{r,s}(X)\otimes_{\mathbb{C}} H^{q,q}(Y)$ for $r\neq s$}
\end{displaymath}
from (\ref{rep4}), where $r+s+2q=2k$. So $\tau_i^{r,s}=0$ for $r\neq s$. Then $\tau_i^{2p}=\tau_i^{p,p}\in H^{2p}(X,\mathbb{Q})\cap H^{p,p}(X)=Hdg^p(X,\mathbb{Q})$. By (\ref{rep3}), $\alpha$ is in the image of the map (\ref{Kunneth-Hdg}). We proved the claim.

Consider the commutative diagram
\begin{displaymath}
\xymatrix{
\bigoplus_{p+q=k}\textrm{CH}^{p}(X)_{\mathbb{Q}}\otimes_{\mathbb{Q}}\textrm{CH}^{q}(Y)_{\mathbb{Q}}\ar[d]_{\bigoplus_{p+q=k}\lambda^p_{X,\mathbb{Q}}\otimes\lambda^q_{Y,\mathbb{Q}}} \ar[r]^{\qquad\quad\times}  & \textrm{CH}^{k}(X\times Y)_{\mathbb{Q}} \ar[d]^{\lambda^k_{X\times Y,\mathbb{Q}}} \\
 \bigoplus_{p+q=k}Hdg^{p}(X,\mathbb{Q})\otimes_{\mathbb{Q}}Hdg^{q}(Y,\mathbb{Q}) \ar[r]^{\qquad\quad\times}  & Hdg^{k}(X\times Y,\mathbb{Q}).}
\end{displaymath}
Suppose that $\textrm{Hodge}(X,\mathbb{Q})$ and $\textrm{Hodge}(Y,\mathbb{Q})$. Then the first vertical map is surjective. The isomorphism (\ref{Kunneth-Hdg}) implies that $\lambda^k_{X\times Y,\mathbb{Q}}$ is surjective. We complete the proof.
\end{proof}


\end{document}